%
%
%

\documentclass[12pt,a4paper]{article}

\usepackage{latexsym,amssymb,amsmath}

\begin{document}

\newtheorem{define}{Definition}
\newtheorem{proposition}[define]{Proposition}
\newtheorem{theorem}[define]{Theorem}
\newtheorem{lemma}[define]{Lemma}
\newtheorem{corollary}[define]{Corollary}
\newtheorem{problem}[define]{Problem}
\newtheorem{conjecture}[define]{Conjecture}
%
%
\newenvironment{proof}{
\par
\noindent {\bf Proof.}\rm}%
{\mbox{}\hfill\rule{0.5em}{0.809em}\par}

\baselineskip=22pt
\parindent=0.6cm


\title{A Common Generalization of the Theorems of 
Erd\H{o}s-Ko-Rado and Hilton-Milner}

\author{
Wei-Tian Li\\
\normalsize Department of Applied Mathematics \\
\normalsize National Chung Hsing University \\
\normalsize Taichung 40227, Taiwan\\
\normalsize {\tt Email:weitianli@dragon.nchu.edu.tw}\\
\and
Bor-Liang Chen\\
\normalsize Department of Business Administration\\
\normalsize National Taichung University of Science and Technology\\
\normalsize Taichung 40401, Taiwan\\
\normalsize {\tt Email:blchen@ntit.edu.tw}
\and
Kuo-Ching Huang\\
\normalsize Department of Financial and Computational Mathematics\\
\normalsize Providence University\\
\normalsize Taichung 43301, Taiwan\\
\normalsize {\tt Email:kchuang@gm.pu.edu.tw}
\and
Ko-Wei Lih \thanks{Research supported by NSC 
(No. 100-2517-S-001-001-MY3)} \\
\normalsize Institute of Mathematics\\
\normalsize Academia Sinica\\
\normalsize Taipei 10617, Taiwan\\
\normalsize {\tt Email:makwlih@sinica.edu.tw}
}

\date{\small }

\maketitle

\newpage

\begin{abstract}
\noindent
Let $m, n$, and $k$ be integers satisfying $0 < k \leqslant n < 2k 
\leqslant m$. A family of sets $\mathcal{F}$ is called an 
$(m,n,k)$-{\em intersecting family} if $\binom{[n]}{k} \subseteq 
\mathcal{F} \subseteq \binom{[m]}{k}$ and any pair of members of 
$\mathcal{F}$ have nonempty intersection. Maximum $(m,k,k)$- and 
$(m,k+1,k)$-intersecting families are determined by the theorems 
of Erd\H{o}s-Ko-Rado and Hilton-Milner, respectively. We determine 
the maximum families for the cases $n = 2k-1, 2k-2, 2k-3$, and $m$ 
sufficiently large.

\bigskip
\noindent
{\bf Keyword}.\
intersecting family,
cross-intersecting family,
Erd\H{o}s-Ko-Rado, 
Milner-Hilton,
Kneser graph

\bigskip
\noindent
{\bf MSC}: 05D05
\end{abstract}


%
\section{Introduction}
%

For positive integers $a \leqslant b$, define $[a,b]=\{a, a+1, 
\ldots , b\}$ and let $[a]=[1, a]$. The cardinality of a set 
$X$ is denoted by $|X|$. A set of cardinality $n$ is called an 
$n$-set. A family of subsets of $X$ is said to be {\em intersecting} 
if no two members are disjoint. The family of all $k$-subsets of 
$X$ is denoted by $\binom{X}{k}$. Note that $\binom{[m]}{k}$ is 
intersecting if $0 < k \leqslant m < 2k$. If all members of a family 
$\mathcal{F} \subseteq \binom{[m]}{k}$ contain a fixed element, 
then $\mathcal{F}$ is obviously an intersecting family and is said 
to be {\em trivial}. A trivial intersecting family can have at most 
$\binom{m-1}{k-1}$ members. One of the cornerstones of the extremal 
theory of finite sets is the following pioneering result of Erd\H{o}s, 
Ko, and Rado \cite{EKR}.

\begin{theorem}\label{EKR}
Suppose $0 < 2k < m$. Let $\mathcal{F} \subseteq \binom{[m]}{k}$ 
be an intersecting family. Then $|\mathcal{F}| \leqslant 
\binom{m-1}{k-1}$. Moreover, the equality holds if and only 
if $\mathcal{F}$ consists of all $k$-subsets containing a fixed
element.
\end{theorem}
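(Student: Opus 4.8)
The plan is to prove Theorem~\ref{EKR} by Katona's cyclic permutation argument, which delivers the bound cleanly and, with more effort, the uniqueness statement as well. Fix a cyclic ordering of $[m]$, that is, a placement of $1, 2, \ldots, m$ around a circle, and call a $k$-subset an \emph{arc} if its elements occupy $k$ consecutive positions; each cyclic ordering has exactly $m$ arcs, indexed by their starting position. The key local step is the claim that, whenever $m > 2k$, any intersecting family contains at most $k$ arcs of a fixed cyclic ordering. To prove this I would assume some arc $A \in \mathcal{F}$ and observe that the other arcs meeting $A$ are precisely the $2(k-1)$ arcs whose starting position lies within cyclic distance $k-1$ of that of $A$. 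These split into $k-1$ pairs of \emph{disjoint} arcs (pairing the arc that ends just inside the left end of $A$ with the arc that begins just past its right end, and so on); disjointness uses $m \geqslant 2k$. Since $\mathcal{F}$ is intersecting, at most one arc from each pair can lie in $\mathcal{F}$, so at most $1 + (k-1) = k$ arcs occur.

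Next I would double-count the incidences between members of $\mathcal{F}$ and the cyclic orderings in which they appear as arcs. There are $(m-1)!$ cyclic orderings, each contributing at most $k$ members of $\mathcal{F}$ by the local step, while a fixed $k$-set appears as an arc in exactly $k!\,(m-k)!$ cyclic orderings (order the $k$ elements inside a contiguous block, then arrange the block together with the remaining $m-k$ elements around the circle). Comparing the two counts yields
\[
|\mathcal{F}| \cdot k!\,(m-k)! \;\leqslant\; k \cdot (m-1)!,
\]
and dividing gives $|\mathcal{F}| \leqslant k\,(m-1)!/(k!\,(m-k)!) = \binom{m-1}{k-1}$, which is the desired bound.

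The main obstacle is the characterization of equality, and I would expect the bulk of the work to lie here. Equality forces the double-counting to be tight, so \emph{every} cyclic ordering must contribute exactly $k$ arcs of $\mathcal{F}$. I would first determine the tight configurations of the local step: a maximum pairwise-intersecting set of arcs in a fixed ordering must consist of $k$ \emph{consecutive} arcs, and any such block of arcs shares the single element sitting at their common overlap. Hence, on each individual cyclic ordering, the members of $\mathcal{F}$ that happen to be arcs form a ``fan'' through one common point. The delicate remaining task is to upgrade these per-ordering common points to a single global element shared by all of $\mathcal{F}$. For this I would compare cyclic orderings that differ by transposing two adjacent elements and track how the common point of the fan may move; showing that it cannot move, so that one element is forced to lie in every member of $\mathcal{F}$, is the crux and the step most prone to case analysis. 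Once a common element $x$ is established, $\mathcal{F}$ is a subfamily of the star of all $k$-sets through $x$, and equality of cardinalities forces $\mathcal{F}$ to be exactly that star.
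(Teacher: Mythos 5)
The paper never proves this statement: it is the classical Erd\H{o}s--Ko--Rado theorem, quoted from \cite{EKR} and used as a black box throughout, so your attempt can only be judged against the statement itself, not against an internal argument. The first half of what you propose is Katona's cycle method, and it is correct and complete as written: a fixed arc meets exactly $2(k-1)$ other arcs, these split into $k-1$ pairs of disjoint arcs (disjointness being exactly where $m \geqslant 2k$ is used), so each cyclic ordering carries at most $k$ members of $\mathcal{F}$; combined with the fact that a given $k$-set is an arc of exactly $k!\,(m-k)!$ of the $(m-1)!$ cyclic orderings, the double count gives $|\mathcal{F}| \leqslant k\,(m-1)!/\bigl(k!\,(m-k)!\bigr) = \binom{m-1}{k-1}$. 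This is a genuinely different, and more elementary, route than the original shifting argument, and it proves the inequality in full.

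The genuine gap is the equality clause, which is half of the stated theorem. Your outline rests on two claims, neither of which is actually established. First, the local claim: for $m > 2k$, an intersecting family of exactly $k$ arcs in a fixed cyclic ordering must be a fan through a single point. This is true, but it is not automatic --- it fails for $m = 2k$ (for $m=6$, $k=3$ the arcs $\{1,2,3\}, \{3,4,5\}, \{5,6,1\}$ are pairwise intersecting with empty total intersection), so any proof must re-use the pair structure: exactly one arc is chosen from each of the $k-1$ disjoint pairs, and for $m \geqslant 2k+1$ the selections that remain pairwise intersecting are forced into a pattern whose common intersection is one point. Second, and more seriously, the globalization step --- that the fan centers of all $(m-1)!$ orderings are one and the same element --- is exactly what you defer as ``the crux and the step most prone to case analysis.'' Announcing a strategy for the hardest step is not the same as carrying it out; as written, the ``only if'' direction is unproven, and this is precisely the portion of Katona's proof that most expositions omit. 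If you want to close the gap cheaply within the framework of this paper, note that uniqueness follows at once from Theorem \ref{HM}: a maximum intersecting family that is not a star has empty common intersection, hence at most $\binom{m-1}{k-1} - \binom{m-1-k}{k-1} + 1$ members, which is strictly less than $\binom{m-1}{k-1}$ whenever $m > 2k$ and $k \geqslant 2$ (the case $k = 1$ being trivial).
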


Let $A \in \binom{[m]}{k}$ and $t \not\in A$. Define $\mathcal{M}_1(A;t)
= \{A\} \cup \{B \in \binom{[m]}{k} \mid t \in B \mbox{ and } A 
\cap B \neq \emptyset \}$. Clearly $|\mathcal{M}_1(A;t)| = \binom{m-1}{k-1} 
- \binom{m-1-k}{k-1} + 1$. Let $X \in \binom{[m]}{3}$. Define
$\mathcal{M}_2(X) = \{ B \in \binom{[m]}{k} \mid |X \cap B| \geqslant 2\}$.
Both $\mathcal{M}_1(A;t)$ and $\mathcal{M}_2(X)$ are intersecting families.
The largest size of a non-trivial intersecting family was determined in 
the following result of Hilton and Milner \cite{HM}.

\begin{theorem}\label{HM}
Suppose $0 < 2k < m$. Let $\mathcal{F} \subseteq \binom{[m]}{k}$ be an 
intersecting family such that $\cap \{A \mid A \in \mathcal{F}\} = \emptyset$. 
Then $|\mathcal{F}| \leqslant \binom{m-1}{k-1} - \binom{m-1-k}{k-1} + 1$.
Moreover, the equality holds if and only if $\mathcal{F}$ is of the form
$\mathcal{M}_1(A;t)$ or the form $\mathcal{M}_2(X)$, the latter occurs only 
for $k = 3$.
\end{theorem}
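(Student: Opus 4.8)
The plan is to reduce to compressed (left-shifted) families by the shifting technique, to bound such families by a cross-intersecting estimate, and then to reverse the shifts for the equality case. First I would recall the $(i,j)$-shift $S_{ij}$ with $i<j$, which replaces each member $B$ of $\mathcal{F}$ containing $j$ but not $i$ by $(B\setminus\{j\})\cup\{i\}$ whenever that set is not already present. This operation preserves $|\mathcal{F}|$ and the intersecting property, and iterating it terminates in a left-compressed family. The first obstacle is that shifting can convert a non-trivial family into a trivial one, so non-triviality is not inherited for free. I would address this by applying only those shifts that keep the family non-trivial; if at some stage every remaining shift would create a common element, then $\mathcal{F}$ lies within a single shift of a star, a configuration restrictive enough to bound by hand. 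In the principal case the process ends at a compressed non-trivial family, which necessarily contains the coordinatewise-smallest $k$-set avoiding the element $1$, namely $T=\{2,3,\ldots,k+1\}$.

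Next I would fix the element $1$ and split $\mathcal{F}$ accordingly, writing $\mathcal{A} = \{B\setminus\{1\} \mid 1\in B\in\mathcal{F}\}$ and $\mathcal{B} = \{B\in\mathcal{F} \mid 1\notin B\}$, so that $|\mathcal{F}| = |\mathcal{A}| + |\mathcal{B}|$ with $\mathcal{A}\subseteq\binom{[2,m]}{k-1}$ and $\mathcal{B}\subseteq\binom{[2,m]}{k}$. Here compression is essential: in a merely non-trivial family the pair $(\mathcal{A},\mathcal{B})$ is only cross-intersecting, which alone yields nothing better than the Erd\H{o}s--Ko--Rado value $\binom{m-1}{k-1}$ (realized by a star). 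In the compressed family, however, $T$ belongs to $\mathcal{B}$ and, since $\mathcal{F}$ is intersecting, every member meets $T$; in particular every member containing $1$ contains one of $2,\ldots,k+1$, so the number of sets through $1$ is at most $\binom{m-1}{k-1}-\binom{m-1-k}{k-1}$. The crux, and the main obstacle, is then to control $\mathcal{B}$ against this and prove
\[
|\mathcal{A}| + |\mathcal{B}| \leqslant \binom{m-1}{k-1} - \binom{m-1-k}{k-1} + 1 .
\]
The mechanism is a trade-off enforced by compression: enlarging $\mathcal{B}$ beyond $\{T\}$ forces additional traces out of $\mathcal{A}$ faster than $|\mathcal{B}|$ grows. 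Making this precise for every $m>2k$ is the heart of the proof, and I would carry it out either by a second compression applied simultaneously to $(\mathcal{A},\mathcal{B})$ or via the Kruskal--Katona theorem.

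Finally, for the equality characterization I would run the shifts backwards. Equality in the estimate pins down the compressed extremal family: generically it forces $\mathcal{B}=\{T\}$, so that $\mathcal{F}$ is the compressed copy of $\mathcal{M}_1(A;t)$, and transporting this structure back through the size- and intersection-preserving shifts shows the original $\mathcal{F}$ has the form $\mathcal{M}_1(A;t)$. The genuinely delicate point is the exceptional family $\mathcal{M}_2(X)$, and I expect it to appear exactly when $k=3$: there the traces in $\mathcal{A}$ are pairs, the relevant cross-intersecting extremal problem acquires the additional tight configuration coming from a triangle on $[2,m]$, and this second equality case reverse-shifts precisely to the families $\{B \mid |X\cap B|\geqslant 2\}$. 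I would therefore treat $k=3$ separately, enumerate the compressed families meeting the bound with equality, and verify directly that undoing the shifts yields exactly $\mathcal{M}_1(A;t)$ and $\mathcal{M}_2(X)$. Reconstructing all pre-images under the non-injective shift maps is the part I expect to require the most care.
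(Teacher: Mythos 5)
The paper does not prove Theorem \ref{HM} at all: it is quoted as background with a citation to Hilton and Milner \cite{HM}, so there is no in-paper proof to compare against and your proposal must stand on its own. It does not. The decisive gap is the step you yourself call the heart of the proof: the inequality $|\mathcal{A}|+|\mathcal{B}|\leqslant \binom{m-1}{k-1}-\binom{m-1-k}{k-1}+1$ \emph{is} the Hilton--Milner theorem in cross-intersecting form, and you never prove it --- you only name two candidate tools (``a second compression \ldots or \ldots Kruskal--Katona''). Worse, the properties you have actually secured by that point are provably insufficient to imply it: take $\mathcal{A}$ to be all $(k-1)$-subsets of $[2,m]$ containing $2$ and $\mathcal{B}$ all $k$-subsets of $[2,m]$ containing $2$. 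Then $(\mathcal{A},\mathcal{B})$ is cross-intersecting, $\mathcal{B}$ is intersecting, $T=\{2,\ldots,k+1\}\in\mathcal{B}$, every set through $1$ meets $T$, and yet $|\mathcal{A}|+|\mathcal{B}|=\binom{m-2}{k-2}+\binom{m-2}{k-1}=\binom{m-1}{k-1}$, which exceeds the bound. This configuration is the star at $2$, excluded only by non-triviality --- but non-triviality of the terminal family is exactly what your shifting step fails to deliver: if you apply only those shifts that preserve non-triviality, the process need not end at a \emph{fully} compressed family, so your claim that it contains $T$ is unjustified, and the fallback case (``within a single shift of a star \ldots bound by hand'') is an assertion, not an argument. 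So both halves of the plan --- reaching a compressed non-trivial family, and bounding it --- are missing their essential content.

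The equality characterization has the same character. Shifting is many-to-one, so ``running the shifts backwards'' is not a well-defined operation; recovering \emph{all} extremal families from the extremal compressed ones is a separate, delicate argument in every published proof, and it is precisely where the exceptional families $\mathcal{M}_2(X)$ for $k=3$ must be found rather than merely predicted. Your overall strategy (compression, splitting at a fixed element, a cross-intersecting trade-off) is the standard one, going back to Frankl and to Frankl--F\"uredi, and it can be completed --- indeed the Frankl--Tokushige inequality that the paper quotes as Theorem \ref{XFM} is a descendant of exactly this machinery --- but as written your proposal is an outline whose two hard steps (the trade-off inequality and the reverse-engineering of equality) are placeholders.
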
  

In a more general form, the Erd\H{o}-Ko-Rado theorem describes the size and 
structure of the largest collection of $k$-subsets of an $n$-set having the 
property that the intersection of any two subsets contains at least $t$ elements.
The Erd\H{o}-Ko-Rado theorem has motivated a great deal of development of finite extremal set 
theory since its first publication in 1961. The complete establishment of 
the general form was achieved through cumulative works of Frankl \cite{FRANKL78},
Wilson \cite{WILSON}, and Ahlswede and Khachatrian \cite{AK97}. Ahlswede and 
Khachatrian \cite{AK96} even extended the Hilton-Milner theorem in the general 
case. The reader is referred to Deza and Frankl \cite{DF}, Frankl \cite{FRANKL87}, 
and Borg \cite{Borg} for surveys on relevant results.

Let $0 < k \leqslant n < 2k \leqslant m$. We call an intersecting family 
$\mathcal{F}$ an $(m,n,k)$-{\em intersecting family} if $\binom{[n]}{k} 
\subseteq \mathcal{F} \subseteq \binom{[m]}{k}$. Define $\alpha(m,n,k)=
\max\{|\mathcal{F}| \mid \mathcal{F} \mbox{ is an $(m,n,k)$-intersecting 
family}\}$. An $(m,n,k)$-intersecting family with cardinality $\alpha(m,n,k)$ 
is called a {\em maximum} family. The focus for our study is the following.

\begin{problem}\label{prob_1}
For $0 < k \leqslant n < 2k \leqslant m$, determine $\alpha(m,n,k)$ 
and the corresponding maximum families.
\end{problem}

Suppose that $\mathcal{F}$ is an $(m,n,k)$-intersecting family. 
If any $A \in \mathcal{F}$ satisfies $|A \cap [n]| \leqslant n - k$,
then $|[n] \setminus A| \geqslant n-(n-k) = k$. Hence, there exists 
a $k$-subset $B \subseteq[n] \setminus A$. It is clear that $B \in 
\mathcal{F}$ and $B\cap A = \emptyset$, violating the intersecting
condition on $\mathcal{F}$. Hence, we have a size constraint on any 
$A \in \mathcal{F}$: $|A \cap [n]| \geqslant n-k+1$, or equivalently,
$|A \setminus [n]| \leqslant 2k-n-1$.

For any fixed $t \in [n]$, define 
\[ 
\mathcal{H}_t^{m,n,k} = \binom{[n]}{k} \cup \bigcup_{i=1}^{2k-n-1} 
\left \{ A \cup B \cup \{t\} \left|  A\in \binom{[n] \setminus \{t\}} 
{k-i-1}, B\in \binom{[n+1,m]}{i} \right. \right \}.
\]
We often write $\mathcal{H}_t$ for $\mathcal{H}_t^{m,n,k}$ if the context
is clear. It is easy to see that $\mathcal{H}_t$ is an $(m,n,k)$-intersecting 
family and its cardinality is equal to
\[
h(m,n,k) = \binom{n}{k} +
\sum_{i=1}^{2k-n-1} \binom{n-1}{k-i-1}\binom{m-n}{i}.
\]
Hence, $\alpha(m,n,k) \geqslant h(m,n,k)$. 

For the case $n = k$, Theorem \ref{EKR} shows that $\alpha(m,k,k) 
= \binom{m-1}{k-1} = h(m,n,k)$ and all maximum families are of the 
form $\mathcal{H}_t$ for some $t \in [k]$. For the case $n = k+1$, 
a maximum family is non-trivial since $\binom{[k+1]}{k} = \{[k+1] 
\setminus \{i\} \mid 1\leqslant i\leqslant k+1\}$ and $\cap \{A \mid 
A \in \binom{[k+1]}{k}\} = \emptyset$. Theorem \ref{HM} shows that 
$\alpha(m,k+1,k) = \binom{m-1}{k-1} - \binom{m-1-k}{k-1} + 1 = 
h(m,k+1,k)$ and all maximum families are of the form $\mathcal{M}_1(A;t) 
= \mathcal{H}_t$, where $t \in [k+1]$ and $A = [k+1] \setminus \{t\}$,
or the form $\mathcal{M}_2(X)$, where $X \in \binom{[4]}{3}$, the 
latter occurs only for $k = 3$.

In view of the above paragraph, a solution of Problem \ref{prob_1} can be 
regarded as a common generalization of both the Erd\H{o}s-Ko-Rado and the 
Hilton-Milner theorems. For these two particular cases, the obvious lower
bound $h(m,n,k)$ coincides with the maximum value and, except the case for 
$k = 3$ and $n = 4$, all maximum families are of the form $\mathcal{H}_t$. 
This phenomenon leads us to pose the following.

\begin{problem}\label{prob_2}
When does $\alpha(m,n,k) = h(m,n,k)$ hold? When it does, are 
$\mathcal{H}_t$'s the only maximum families?
\end{problem}

In this paper, we answer the above questions for the cases $n = 2k-1, 2k-2, 
2k-3$, and $m$ sufficiently large.

%
\section{The cases for $m = 2k$ and $n = 2k-1$}
%

\begin{proposition}\label{2KK}
We have $\alpha(2k,n,k) = \frac{1}{2}\binom{2k}{k} = h(2k,n,k)$.
\end{proposition}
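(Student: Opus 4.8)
The plan is to sandwich $\alpha(2k,n,k)$ between two copies of $\frac12\binom{2k}{k}$, exploiting the special structure of $\binom{[2k]}{k}$ under complementation. The elementary fact I would start from is that two $k$-subsets $C,C'$ of $[2k]$ are disjoint if and only if $C'=[2k]\setminus C$: since $|C|+|C'|=2k=|[2k]|$, disjointness forces them to partition $[2k]$. Hence $\binom{[2k]}{k}$ splits into exactly $\frac12\binom{2k}{k}$ complementary pairs $\{C,[2k]\setminus C\}$, and an intersecting family may contain at most one member from each pair. This immediately yields the upper bound $\alpha(2k,n,k)\le\frac12\binom{2k}{k}$.

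For the matching lower bound and the evaluation of $h$, I would show that the extremal candidate $\mathcal{H}_t$ already meets every complementary pair in exactly one set. This gives $|\mathcal{H}_t|=\frac12\binom{2k}{k}$ at once, and since $\mathcal{H}_t$ is an $(2k,n,k)$-intersecting family it forces $\frac12\binom{2k}{k}=|\mathcal{H}_t|=h(2k,n,k)\le\alpha(2k,n,k)$. Fix $t\in[n]$ and a pair $\{C,\bar C\}$ with $\bar C=[2k]\setminus C$, and set $a=|C\setminus[n]|$, so $|\bar C\setminus[n]|=2k-n-a$. Recalling that $\mathcal{H}_t$ consists of all $k$-sets $S$ with $|S\setminus[n]|=0$ together with all $k$-sets $S$ with $t\in S$ and $1\le|S\setminus[n]|\le 2k-n-1$, I would split into cases on $a$.

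When $a=0$ (or symmetrically $a=2k-n$), exactly one of the two sets lies in $\binom{[n]}{k}$ and hence in $\mathcal{H}_t$, while its complement has $2k-n$ elements outside $[n]$ and is excluded by the size constraint $|A\setminus[n]|\le 2k-n-1$ derived just before the proposition. When $1\le a\le 2k-n-1$, both $C$ and $\bar C$ straddle $[n]$ within the admissible range, so membership in $\mathcal{H}_t$ is governed solely by whether the set contains $t$; since $t\in[n]\subseteq C\cup\bar C$ and $C\cap\bar C=\emptyset$, exactly one of them contains $t$. In every case precisely one member of the pair lies in $\mathcal{H}_t$, as claimed.

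The main obstacle is the bookkeeping in this last step: one must confirm that the two mechanisms preventing $\mathcal{H}_t$ from covering a pair twice or zero times --- the size constraint eliminating the ``all-outside'' complement at the boundary $a\in\{0,2k-n\}$, and the single occurrence of $t$ in the interior range $1\le a\le 2k-n-1$ --- together account for all pairs with no gap and no overlap. Once this dichotomy is verified, the chain $\frac12\binom{2k}{k}\le h(2k,n,k)\le\alpha(2k,n,k)\le\frac12\binom{2k}{k}$ collapses to equality throughout, proving the proposition.
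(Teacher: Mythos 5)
Your proof is correct and takes essentially the same approach as the paper: the upper bound comes from the fact that disjoint $k$-subsets of $[2k]$ must be complementary, so an intersecting family contains at most one member of each of the $\frac{1}{2}\binom{2k}{k}$ complementary pairs, and the lower bound comes from seeing that $\mathcal{H}_t$ selects exactly one member from each pair. Your case analysis on $a=|C\setminus[n]|$ merely spells out in detail what the paper asserts when it identifies $\mathcal{H}_t$ as a special case of its pick-one-per-pair construction (prefer the set inside $[n]$, otherwise take the one containing $t$).
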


This is true because any $(2k,n,k)$-intersecting family cannot
contain a $k$-subset and its complement in $[2k]$ simultaneously.
Any maximum family $\mathcal{F}$ can be obtained in the following 
manner. Pick a pair of a $k$-subset $A$ and its complement $A'=[2k] 
\setminus A$. If $A$ or $A'$ is a subset of $[n]$, then we put it 
in $\mathcal{F}$. Otherwise, we put any one of them in $\mathcal{F}$.

A special case of the above construction for a maximum family is
to choose the one that contains a prescribed element $t$ when neither
$A$ nor $A'$ is a subset of $[n]$. If $t\in[n]$, then the family so
constructed is precisely $\mathcal{H}_t$. 

\medskip
\noindent
{\bf Convention}.\
From now on, we always assume that $0 < k \leqslant n < 2k < m$ for 
any $(m,n,k)$-intersecting family.

\begin{proposition}
For $n = 2k-1$, we have $\alpha(m,n,k) = \binom{n}{k} = h(m,n,k)$ and 
$\binom{[n]}{k}$ is the unique maximum $(m,n,k)$-intersecting family.
\end{proposition}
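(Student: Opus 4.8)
The plan is to exploit the size constraint on members of an $(m,n,k)$-intersecting family that was established in the introduction, namely that every $A \in \mathcal{F}$ satisfies $|A \cap [n]| \geqslant n - k + 1$. Substituting $n = 2k - 1$ turns this into $|A \cap [n]| \geqslant k$. Since $|A| = k$, the inequality forces $|A \cap [n]| = k$, which is to say $A \subseteq [n]$. Thus the size constraint alone pins down the ambient set of every member, and no member can use any point of $[n+1,m]$.

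First I would record the resulting reduction: the constraint yields $\mathcal{F} \subseteq \binom{[n]}{k}$. Combined with the defining inclusion $\binom{[n]}{k} \subseteq \mathcal{F}$ of an $(m,n,k)$-intersecting family, this sandwich immediately forces $\mathcal{F} = \binom{[n]}{k}$. Hence there is in fact only one $(m,n,k)$-intersecting family at all, so it is trivially the unique maximum one and everything else is bookkeeping.

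Next I would verify that $\binom{[n]}{k}$ is genuinely intersecting, to confirm that the family is legitimate and not vacuous: for any two $k$-subsets $A$ and $B$ of the $(2k-1)$-set $[n]$, inclusion-exclusion gives $|A \cap B| \geqslant |A| + |B| - n = 2k - (2k-1) = 1$, so no two members are disjoint; this is exactly where the standing hypothesis $n < 2k$ is used. Finally I would compute the two target quantities. Since $2k - n - 1 = 0$ when $n = 2k-1$, the summation in the definition of $h(m,n,k)$ is empty, so $h(m,n,k) = \binom{n}{k}$; and clearly $|\mathcal{F}| = \left| \binom{[n]}{k} \right| = \binom{n}{k}$. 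Together these give $\alpha(m,n,k) = \binom{n}{k} = h(m,n,k)$, with $\binom{[n]}{k}$ the unique maximum family.

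I do not expect any genuine obstacle here: the entire content is that for $n = 2k-1$ the size constraint is rigid enough to leave no freedom whatsoever, collapsing the problem to a single family. The only point that requires a moment's care is not to omit the verification that $\binom{[n]}{k}$ is itself intersecting, since otherwise one has not shown that a valid $(m,n,k)$-intersecting family exists in the first place.
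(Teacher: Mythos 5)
Your proof is correct and follows essentially the same route as the paper: both use the constraint $|A \cap [n]| \geqslant n-k+1 = k$ to force every member into $\binom{[n]}{k}$, then invoke the sandwich $\binom{[n]}{k} \subseteq \mathcal{F} \subseteq \binom{[n]}{k}$. Your added checks (that $\binom{[n]}{k}$ is intersecting, and that the sum defining $h(m,n,k)$ is empty) are facts the paper treats as already established in the introduction, so they are harmless bookkeeping rather than a different argument.
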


\begin{proof}
Let $\mathcal{F}$ be a maximum $(m,n,k)$-intersecting family. 
For any $A \in \mathcal{F}$, we know $k \geqslant |A \cap[n]| 
\geqslant n-k+1=k$. Thus, $A \in \binom{[n]}{k}$, and 
hence $\mathcal{F} \subseteq \binom{[n]}{k}$. Therefore, 
$\mathcal{F} = \binom{[n]}{k}$ and $\alpha(m,n,k) = 
|\mathcal{F}| = \binom{n}{k} = h(m,n,k)$. 
\end{proof}

%
\section{The case for $n = 2k-2$}
%

Frequently, extremal problems concerning sub-families of
$\binom{[m]}{k}$ can be translated into the context of Kneser
graphs so that graph-theoretical tools may be employed to solve
them. For $0 < 2k \leqslant n$, a {\em Kneser graph} ${\sf KG}(n,k)$
has vertex set $\binom{[n]}{k}$ such that two vertices $A$ and
$B$ are adjacent if and only if they are disjoint as subsets.
By stipulation, we use ${\sf KG}(n,k)$ to denote the graph consisting 
of $\binom{n}{k}$ isolated vertices when $0 < k \leqslant n < 2k$.
An {\em independent} set in a graph is a set of vertices no two of
which are adjacent. The maximum cardinality of an independent set in a 
graph $G$ is called the {\em independence number} of $G$ and is denoted 
by $\alpha(G)$. The Erd\H{o}s-Ko-Rado theorem just gives the independence
number of a Kneser graph and characterizes all maximum independent sets.

The {\em direct product} $G \times H$ of two graphs $G$ and 
$H$ is defined on the vertex set $\{(u,v) \mid u \in G \mbox{ and } 
v \in H\}$ such that two vertices $(u_1,v_1)$ and $(u_2,v_2)$ are 
adjacent if and only if $u_1$ is adjacent to $u_2$ in $G$ and $v_1$ is 
adjacent to $v_2$ in $H$. The cardinality of the vertex set of a graph
$G$ is denoted by $|G|$. The following result is due to Zhang \cite{Zhang}.

\begin{theorem}\label{DIR}
Let $G$ and $H$ be vertex-transitive graphs. Then $\alpha(G \times H) = 
\max \{\alpha(G)|H|,|G|\alpha(H)\}$. Furthermore, every maximum independent 
set of $G \times H$ is the pre-image of an independent set of $G$ or $H$ 
under projection.
\end{theorem}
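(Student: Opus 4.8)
The plan is to establish the value $\alpha(G\times H)=\max\{\alpha(G)|H|,\,|G|\alpha(H)\}$ by proving the two matching inequalities, and then to read off the structural description from the equality case of the upper bound. For the lower bound I would simply exhibit the claimed extremal families: if $S$ is a maximum independent set of $G$, then $S\times V(H)$ is independent in $G\times H$, since two of its vertices $(u_1,v_1)$ and $(u_2,v_2)$ can be adjacent only when $u_1$ is adjacent to $u_2$, which is impossible for $u_1,u_2\in S$. This gives $\alpha(G\times H)\ge\alpha(G)|H|$, and symmetrically $\alpha(G\times H)\ge|G|\alpha(H)$; these pre-images under the two projections are exactly the families the theorem asserts to be extremal. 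I would also record that the product of the two automorphism groups acts transitively on $V(G)\times V(H)$, so $G\times H$ is itself vertex-transitive, which lets me move freely between the independence number and the ratio $\alpha/|\cdot|$.

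For the upper bound I would fix an arbitrary independent set $I\subseteq V(G)\times V(H)$ and analyze it through its fibers $I_u=\{v : (u,v)\in I\}$. The only restriction coming from the product adjacency is between fibers over adjacent vertices: if $u$ is adjacent to $u'$ in $G$, then no edge of $H$ joins $I_u$ to $I_{u'}$, so these two fibers are cross-independent in $H$; a single fiber is unconstrained because $G$ has no loops. The target becomes $\sum_u|I_u|\le\max\{\alpha(G)|H|,\,|G|\alpha(H)\}$. Dividing by $|G||H|$ and using that all three graphs are vertex-transitive, this is equivalent to saying that the independence ratio of the product is the larger of the two factor ratios, i.e.\ $\rho(G\times H)=\max\{\rho(G),\rho(H)\}$ where $\rho(\cdot)=\alpha/|\cdot|$. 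Phrased through the fractional chromatic number $\chi_f$, for which vertex-transitive graphs satisfy $\chi_f=|\cdot|/\alpha$, the claim is the identity $\chi_f(G\times H)=\min\{\chi_f(G),\chi_f(H)\}$.

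One direction of this identity is immediate and merely recovers the lower bound: the two coordinate projections are graph homomorphisms $G\times H\to G$ and $G\times H\to H$, and $\chi_f$ never increases along a homomorphism, so $\chi_f(G\times H)\le\min\{\chi_f(G),\chi_f(H)\}$. The reverse inequality is the heart of the matter and the step I expect to be the main obstacle. Any naive attempt to bound $I$ by slicing the product into ``independent slabs'' $S\times V(H)$ fails, because $I$ is entirely unconstrained inside such a slab; the cross-independence of adjacent fibers is a genuinely two-dimensional condition and must be used as such. My plan is to build a fractional clique of $G\times H$ of total weight $\min\{\chi_f(G),\chi_f(H)\}$ out of optimal fractional cliques of the two factors, the delicate point being to verify the clique constraint against every independent set of the product, not merely the product-shaped ones.

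Finally I would extract the characterization from the equality analysis. Assuming, without loss of generality, $\alpha(G)|H|\ge|G|\alpha(H)$ and $|I|=\alpha(G)|H|$, I would argue that equality in the fiber estimate forces each fiber $I_u$ to be either empty or all of $V(H)$, with $S=\{u : I_u=V(H)\}$ a maximum independent set of $G$, so $I=S\times V(H)$; when $\alpha(G)|H|=|G|\alpha(H)$ the dual family $V(G)\times T$ can also occur. The difficulty here is rigidity: I must rule out mixed extremal configurations in which some fibers are proper nonempty subsets of $V(H)$, which I would do by showing that such a configuration leaves strict slack in the key inequality unless it collapses to one of the two pure types, and by using vertex-transitivity to propagate the resulting structure across the whole graph.
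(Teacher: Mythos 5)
The first thing to note is that the paper itself does not prove this theorem: it is imported as a known result of Zhang \cite{Zhang}, whose proof in turn rests on Zhu's fractional version of Hedetniemi's conjecture. So your proposal can only be judged on its own merits, and on those merits it is a road map rather than a proof: both of its load-bearing steps are announced but not carried out. Your reductions are correct --- $S\times V(H)$ gives the lower bound, a product of vertex-transitive graphs is vertex-transitive, $\chi_f = |\cdot|/\alpha$ for vertex-transitive graphs, and the projections are homomorphisms --- so the value statement is indeed equivalent to the identity $\chi_f(G\times H)=\min\{\chi_f(G),\chi_f(H)\}$, of which the direction $\leqslant$ is easy. But the direction $\geqslant$ is precisely Zhu's theorem, and your sketch says only that you would ``build a fractional clique of $G\times H$ out of optimal fractional cliques of the factors,'' with the verification of the clique constraint against arbitrary independent sets of the product flagged as ``the delicate point.'' That delicate point is the entire content: an arbitrary independent set $I$ of $G\times H$ is not product-shaped, and converting the cross-independence of its fibers into the required weighted inequality is a genuinely hard argument that your plan does not begin. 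Asserting the existence of the required fractional clique is logically equivalent to asserting the bound you are trying to prove.

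Second, the structural claim does not follow from your equality analysis as described. Your only quantitative tool is the fractional relaxation, and equality in an LP bound does not by itself force each fiber $I_u$ to be empty or all of $V(H)$: a priori a maximum independent set could have many proper nonempty fibers whose sizes merely average out correctly. Excluding such mixed configurations is the actual contribution of Zhang's paper (the value formula follows readily once Zhu's theorem is available), and it needs its own rigidity argument --- for instance a weighted counting over orbits exploiting vertex-transitivity, or an analysis of when equality can hold in the clique-constraint verification --- not just the statement that a mixed configuration ``leaves strict slack unless it collapses.'' You have identified the two obstacles accurately, which is to your credit, but identifying them is where the proposal stops; both gaps are genuine.
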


Since Kneser graphs are vertex-transitive, we are going to use the
above theorem for $G = {\sf KG}(n_1,k_1)$ and $H = {\sf KG}(n_2,k_2)$.
The version of Theorem \ref{DIR} for Kneser graphs was established in 
an earlier paper \cite{FRANKL} of Frankl.

Suppose that $\mathcal{F}$ is an $(m,n,k)$-intersecting family.
Define its {\em canonical partition} as follows.
\[
\mathcal{F} = \binom{[n]}{k} \cup (\bigcup_{i=1}^{2k-n-1}
\mathcal{F}_i),
\]
where $\mathcal{F}_i = \{F \in \mathcal{F} \mid |F \cap [n]| = k-i
\mbox{ and } |F \cap [n+1,m]| = i\}$. For each $i$, we define an 
injection $f_i$ from $\mathcal{F}_i$ to the vertex set of ${\sf KG}(n,k-i) 
\times {\sf KG}(m-n,i)$ such that $f_i(F) = (A,B^*)$, where $A=F \cap [n]$ 
and $B^* = \{b-n \mid b \in F \mbox{ and } b \geqslant n+1 \}$. Since 
$\mathcal{F}_i$ is intersecting, it is easy to verify that the image of 
$f_i$ is an independent set of ${\sf KG}(n,k-i) \times {\sf KG}(m-n,i)$. 
Thus, $|\mathcal{F}_i| \leqslant \alpha({\sf KG}(n,k-i) \times {\sf KG}(m-n,i))$.
We immediately obtain the following upper bound.
\[
|\mathcal{F}| \leqslant \binom{n}{k} + \sum_{i=1}^{2k-n-1}
\alpha({\sf KG}(n,k-i) \times {\sf KG}(m-n,i)).
\]

We can derive the following by Theorem \ref{EKR}, Theorem \ref{DIR}, 
and direct computation.

\begin{lemma}\label{lem_max}
When $2(k-i) \leqslant n$ and $2i \leqslant m-n$,  
\[
\alpha({\sf KG}(n,k-i) \times {\sf KG}(m-n,i)) = \left \{
\begin{array}{ll}
\binom{n-1}{k-i-1}\binom{m-n}{i} & \mbox{ if } m \geqslant nk/(k-i),
\vspace{0.3cm}\\
\binom{n}{k-i}\binom{m-n-1}{i-1} & \mbox{ otherwise}.
\end{array}
\right.
\]
When $2(k-i) > n$ or $2i > m-n$, $\alpha({\sf KG}(n,k-i) \times 
{\sf KG}(m-n,i)) = \binom{n}{k-i}\binom{m-n}{i}$.
\end{lemma}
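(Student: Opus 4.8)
The plan is to set $G = {\sf KG}(n,k-i)$ and $H = {\sf KG}(m-n,i)$, so that $|G| = \binom{n}{k-i}$ and $|H| = \binom{m-n}{i}$, and to obtain $\alpha(G \times H)$ by computing the individual independence numbers $\alpha(G)$ and $\alpha(H)$ and feeding them into Theorem \ref{DIR}. The two cases of the statement correspond to whether the Kneser factors are genuine graphs (having edges) or degenerate into collections of isolated vertices, so I would treat them separately.

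First I would dispose of the second case. If $2(k-i) > n$, then no two members of $\binom{[n]}{k-i}$ can be disjoint, so by the paper's convention ${\sf KG}(n,k-i)$ is edgeless; likewise ${\sf KG}(m-n,i)$ is edgeless when $2i > m-n$. Since a vertex $(u_1,v_1)$ is adjacent to $(u_2,v_2)$ in the direct product only when $u_1$ is adjacent to $u_2$ \emph{and} $v_1$ is adjacent to $v_2$, having even one edgeless factor forces $G \times H$ to be edgeless. Hence every vertex subset is independent and $\alpha(G \times H) = |G|\,|H| = \binom{n}{k-i}\binom{m-n}{i}$, as claimed.

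For the first case ($2(k-i) \leqslant n$ and $2i \leqslant m-n$) I would first record the factor independence numbers. When the inequalities are strict, Theorem \ref{EKR} gives $\alpha(G) = \binom{n-1}{k-i-1}$ and $\alpha(H) = \binom{m-n-1}{i-1}$. The boundary subcases $2(k-i) = n$ and $2i = m-n$ each turn the corresponding factor into a perfect matching (complementary pairs), whose independence number is half its vertex count; these still agree with the stated formulas thanks to the identity $\binom{2j-1}{j-1} = \tfrac{1}{2}\binom{2j}{j}$. As perfect matchings and Kneser graphs are vertex-transitive, Theorem \ref{DIR} then applies and yields
\[
\alpha(G \times H) = \max\left\{ \binom{n-1}{k-i-1}\binom{m-n}{i},\ \binom{n}{k-i}\binom{m-n-1}{i-1} \right\}.
\]

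It remains to decide which term dominates. I would compute the ratio of the first term to the second: using $\binom{n-1}{k-i-1}/\binom{n}{k-i} = (k-i)/n$ and $\binom{m-n}{i}/\binom{m-n-1}{i-1} = (m-n)/i$, the ratio equals $(k-i)(m-n)/(ni)$, which is at least $1$ precisely when $(k-i)m \geqslant nk$, i.e. $m \geqslant nk/(k-i)$. This splits the maximum into the two advertised branches, with the tie at $m = nk/(k-i)$ absorbed into the first branch by the weak inequality. The only genuinely delicate point is the boundary handling: Theorem \ref{EKR} is stated with a strict inequality $2k < m$, so the equality cases must be justified separately through the perfect-matching identity rather than quoted outright. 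Everything else reduces to the two cited theorems together with routine binomial arithmetic.
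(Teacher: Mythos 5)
Your proposal is correct and follows exactly the route the paper intends: the paper gives no detailed proof, saying only that the lemma follows from Theorem \ref{EKR}, Theorem \ref{DIR}, and direct computation, which is precisely your combination of factor independence numbers, Zhang's product theorem, and the ratio comparison $(k-i)(m-n)/(ni) \geqslant 1 \Leftrightarrow m \geqslant nk/(k-i)$. Your explicit handling of the boundary subcases $2(k-i)=n$ and $2i=m-n$ via the perfect-matching identity $\binom{2j-1}{j-1}=\tfrac{1}{2}\binom{2j}{j}$ is a detail the paper silently glosses over (its Theorem \ref{EKR} requires strict inequality), so your write-up is if anything more complete than the original.
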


\begin{theorem}
For $n = 2k - 2$, we have $\alpha(m,n,k) = h(m,n,k)$. All the maximum
families are of the form $\binom{[2k-2]}{k}\cup\{F\cup\{b\}\mid F\in
\mathcal{F}^*,b\in[2k-1,m]\}$, where $\mathcal{F}^*$ is any maximum 
intersecting family of $(k-1)$-subsets of $[2k-2]$.
\end{theorem}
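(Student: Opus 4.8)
The plan is to feed the case $n=2k-2$ through the general machinery already set up: the canonical-partition upper bound, Lemma \ref{lem_max}, and Zhang's structural result (Theorem \ref{DIR}). Since $n=2k-2$ we have $2k-n-1=1$, so the canonical partition collapses to $\mathcal{F}=\binom{[n]}{k}\cup\mathcal{F}_1$ and the displayed upper bound reads $|\mathcal{F}|\leqslant\binom{n}{k}+\alpha({\sf KG}(n,k-1)\times{\sf KG}(m-n,1))$. First I would check the hypotheses of the first case of Lemma \ref{lem_max} with $i=1$: here $2(k-1)=2k-2=n$ and $2\leqslant m-n$ (as $m>2k$), while the threshold $nk/(k-1)=(2k-2)k/(k-1)=2k$ is met since $m>2k$. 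Thus $\alpha({\sf KG}(n,k-1)\times{\sf KG}(m-n,1))=\binom{n-1}{k-2}(m-n)$, and a direct computation gives $\binom{n}{k}+\binom{n-1}{k-2}(m-n)=\binom{2k-2}{k}+\binom{2k-3}{k-2}(m-2k+2)=h(m,n,k)$. Combined with the standing lower bound $\alpha(m,n,k)\geqslant h(m,n,k)$, this already yields $\alpha(m,n,k)=h(m,n,k)$.

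For the structure, let $\mathcal{F}$ be a maximum family. Equality forces $|\mathcal{F}_1|=\alpha({\sf KG}(n,k-1)\times{\sf KG}(m-n,1))$, so the image of $f_1$ is a \emph{maximum} independent set of this product. Write $G={\sf KG}(2k-2,k-1)$ and $H={\sf KG}(m-n,1)$. Here $H$ is a complete graph, so $\alpha(H)=1$, and $G$ is the balanced Kneser graph in which each $(k-1)$-subset of $[2k-2]$ is disjoint only from its complement; hence $G$ is a perfect matching and $\alpha(G)=\tfrac{1}{2}\binom{2k-2}{k-1}$, its maximum independent sets being exactly the maximum intersecting families $\mathcal{F}^*$ of $(k-1)$-subsets of $[2k-2]$ (the situation of Proposition \ref{2KK} for the parameters $2k-2$ and $k-1$). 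The decisive point is that $\alpha(G)|H|=\tfrac{1}{2}\binom{2k-2}{k-1}(m-n)$ strictly exceeds $|G|\alpha(H)=\binom{2k-2}{k-1}$ precisely because $m-n>2$, i.e. $m>2k$. By Theorem \ref{DIR} every maximum independent set is the projection pre-image of a maximum independent set $I_G$ of $G$, the $H$-type solutions being excluded by this strict inequality. Translating back through $f_1$, we get $\mathcal{F}_1=\{A\cup\{b\}\mid A\in\mathcal{F}^*,\ b\in[2k-1,m]\}$ for a maximum intersecting family $\mathcal{F}^*=I_G$, which is exactly the claimed form.

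Finally I would verify the converse, that each such family is a genuine maximum $(m,n,k)$-intersecting family, so that the correspondence with $\mathcal{F}^*$ is exact. The only intersection condition that is not automatic lies within $\mathcal{F}_1$, and it is encoded precisely by the Kneser product: members $A_1\cup\{b_1\}$ and $A_2\cup\{b_2\}$ meet iff $A_1\cap A_2\neq\emptyset$ or $b_1=b_2$, which holds because $\mathcal{F}^*$ is intersecting. The cross-intersections between $\mathcal{F}_1$ and $\binom{[n]}{k}$ come for free, since a $(k-1)$-subset $A$ and a $k$-subset $C$ of $[2k-2]$ satisfy $|A|+|C|=2k-1>2k-2$ and so cannot be disjoint. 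A size count $|\mathcal{F}|=\binom{2k-2}{k}+|\mathcal{F}^*|(m-n)=h(m,n,k)$ closes the argument.

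I expect the only delicate step to be confirming that we land in the $G$-pre-image branch of Theorem \ref{DIR} rather than the $H$-pre-image branch. This rests entirely on the strict inequality $\alpha(G)|H|>|G|\alpha(H)$, which degenerates to equality exactly at $m=2k$ (the boundary excluded by our standing convention, and already covered by Proposition \ref{2KK}) and holds for all $m>2k$; everything else is the routine binomial bookkeeping indicated above.
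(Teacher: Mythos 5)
Your proposal is correct and follows essentially the same route as the paper's own proof: the canonical partition $\mathcal{F}=\binom{[2k-2]}{k}\cup\mathcal{F}_1$, the upper bound $|\mathcal{F}_1|\leqslant\binom{2k-3}{k-2}\binom{m-2k+2}{1}$ from Lemma \ref{lem_max}, and Theorem \ref{DIR} to identify $f_1(\mathcal{F}_1)$ with $I_G\times V(H)$ for a maximum independent set $I_G$ of ${\sf KG}(2k-2,k-1)$. The only differences are matters of explicitness rather than substance: you spell out why the strict inequality $\alpha(G)|H|>|G|\alpha(H)$ (valid exactly when $m>2k$) rules out the $H$-projection branch of Theorem \ref{DIR}, and you verify the converse that every family of the stated form is intersecting and of size $h(m,2k-2,k)$ --- both points the paper leaves implicit.
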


\begin{proof}
Let $\mathcal{F}$ be a largest $(m,2k-2,k)$-intersecting family with canonical
partition $\binom{[2k-2]}{k} \cup \mathcal{F}_1$. Now, all the conditions
$2(k-1) \leqslant n$, $2 \leqslant m-n$, and $m \geqslant nk/(k-1)$ hold.
It follows from Lemma \ref{lem_max} that $|\mathcal{F}_1| \leqslant \binom{2k-3} 
{k-2} \binom{m-2k+2}{1}$. Then $|\mathcal{F}| = \binom{2k-2}{k} +
|\mathcal{F}_1| \leqslant h(m,2k-2,k)$. As a consequence,  $|\mathcal{F}| = 
h(m,2k-2,k)$ and $|\mathcal{F}_1| = \binom{2k-3}{k-2}\binom{m-2k+2}{1}$. 
By Theorem \ref{DIR}, $f_1(\mathcal{F}_1)$ is a maximum independent set in 
${\sf KG}(2k-2,k-1) \times {\sf KG}(m-2k+2,1)$ and the collection 
$\mathcal{F}^*$ of all the first components of $f_1(\mathcal{F}_1)$ is an 
independent set of ${\sf KG}(2k-2,k-1)$. Clearly, $\mathcal{F}^*$ is maximum 
because of its cardinality. 
\end{proof}

\bigskip

\noindent
{\bf Remark.} When $k=3$, an $(m,2k-2,k)$-family is also an $(m,k+1,k)$ family. 
There are other maximum families besides the collection of all $\mathcal{H}_t$'s. 
This phenomenon is consistent with the Hilton-Milner theorem for the case $k = 3$.

%
\section{The case for $n=2k-3$}
%

Two families of sets $\mathcal{A}$ and $\mathcal{B}$ are said to be 
{\em cross-intersecting} if $A \cap B \neq \emptyset$ for any pair 
$A \in \mathcal{A}$ and $B \in \mathcal{B}$. Frankl and Tokushige
\cite{FRATOK} proved the following.

\begin{theorem}\label{XFM}
Let $\mathcal{A} \subseteq \binom{X}{a}$ and $\mathcal{B} \subseteq 
\binom{X}{b}$ be nonempty cross-intersecting families of subsets of $X$. 
Suppose that $|X| \geqslant a+b$ and $a \leqslant b$. Then
\[
|\mathcal{A}| + |\mathcal{B}| \leqslant \binom{|X|}{b} - \binom{|X| - a}{b}+1.
\]
\end{theorem}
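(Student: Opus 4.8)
The plan is to first pin down the extremal configuration, then reduce to a structured case by compression, and finally apply the Kruskal--Katona theorem. The bound $\binom{|X|}{b}-\binom{|X|-a}{b}+1$ is exactly the size of the pair obtained by taking $\mathcal{A}=\{A_0\}$ for a single $a$-set $A_0$ together with $\mathcal{B}=\{B\in\binom{X}{b}\mid B\cap A_0\neq\emptyset\}$: here $|\mathcal{B}|=\binom{|X|}{b}-\binom{|X|-a}{b}$ counts the $b$-sets meeting $A_0$, and $|\mathcal{A}|=1$. So I would aim to show that no nonempty cross-intersecting pair beats this, the single-set family being the only source of equality. Writing $N=|X|$ and identifying $X$ with $[N]$, I would apply simultaneous compressions $C_{ij}$ to the pair $(\mathcal{A},\mathcal{B})$. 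The routine verification is that a compression leaves $|\mathcal{A}|$ and $|\mathcal{B}|$ unchanged, preserves the cross-intersecting property, and keeps both families nonempty; hence I may assume that $\mathcal{A}$ and $\mathcal{B}$ are both left-compressed.

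For the core estimate I would pass to complements. Put $\overline{\mathcal{A}}=\{[N]\setminus A\mid A\in\mathcal{A}\}\subseteq\binom{[N]}{N-a}$. Since $N\geqslant a+b$ we have $N-a\geqslant b$, and the cross-intersecting condition says precisely that no $B\in\mathcal{B}$ is contained in a member of $\overline{\mathcal{A}}$; equivalently, $\mathcal{B}$ is disjoint from the family $\partial_b\overline{\mathcal{A}}$ of $b$-element subsets of members of $\overline{\mathcal{A}}$. Therefore $|\mathcal{B}|\leqslant\binom{N}{b}-|\partial_b\overline{\mathcal{A}}|$. The Kruskal--Katona theorem, applied to $\overline{\mathcal{A}}$, bounds this shadow from below in terms of $|\mathcal{A}|=|\overline{\mathcal{A}}|$, the minimum being realised by an initial segment of colex order. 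This produces an inequality of the shape $|\mathcal{A}|+|\mathcal{B}|\leqslant \binom{N}{b}+\varphi(|\mathcal{A}|)$, where $\varphi(|\mathcal{A}|)=|\mathcal{A}|-\min|\partial_b\overline{\mathcal{A}}|$, and where $\varphi$ attains the value $1-\binom{N-a}{b}$ exactly when $|\mathcal{A}|=1$, matching the target.

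The main obstacle is that the continuous form of Kruskal--Katona is too lossy to finish directly: optimising the real-variable expression $\binom{x}{N-a}+\binom{N}{b}-\binom{x}{b}$ over $x$ can exceed the target in the middle range, most visibly when $a=b$. So the heart of the argument is a genuinely discrete comparison showing $\varphi(|\mathcal{A}|)\leqslant 1-\binom{N-a}{b}$ for every integer $|\mathcal{A}|\geqslant 1$, with equality only at $|\mathcal{A}|=1$. I would handle this by evaluating the shadow bound at the true extremal families, namely colex initial segments, at integer sizes rather than relaxing to real $x$, using the standing hypothesis $a\leqslant b$ to control the relevant monotonicity; the nonemptiness of $\mathcal{B}$ is exactly what rules out the degenerate endpoint $\mathcal{A}=\binom{[N]}{a}$. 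An attractive alternative that sidesteps this fragile optimisation is induction on $N$: fix the element $N$, split each family according to whether it contains $N$, note that the resulting links and deletions satisfy several cross-intersecting relations on $[N-1]$, and combine the inductive bounds. In either route, the base cases and the bookkeeping of these relations are the only remaining work.
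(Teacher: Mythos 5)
First, a point of reference: the paper does not prove this statement at all --- it quotes it as a known theorem of Frankl and Tokushige \cite{FRATOK} --- so the only available comparison is with the literature, and your setup (simultaneous shifts, passing to complements, the bound $|\mathcal{B}|\leqslant\binom{N}{b}-|\partial_b\overline{\mathcal{A}}|$, then Kruskal--Katona) is indeed the standard route to this result. The reduction to shadows is correct, and you are also right that the continuous (Lov\'asz) form of Kruskal--Katona cannot finish the job. But the proposal stops exactly where the theorem begins: the discrete inequality $\varphi(|\mathcal{A}|)\leqslant 1-\binom{N-a}{b}$ over the feasible range of $|\mathcal{A}|$ \emph{is} the theorem, and you only announce that you ``would handle'' it by evaluating the shadow bound at colex initial segments; no part of that comparison is carried out, and the alternative induction on $N$ is likewise only named. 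What you call ``the only remaining work'' is the entire content of the result, so this is a genuine gap, not a routine verification.

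Moreover, the specific claims you make about the deferred step are false, which shows it is delicate rather than mechanical. You assert that $\varphi$ attains $1-\binom{N-a}{b}$ exactly when $|\mathcal{A}|=1$, i.e.\ that the single-set pair is the only source of equality. Take $a=b=2$ and $\mathcal{A}=\mathcal{B}=\{P\in\binom{[N]}{2}\mid 1\in P\}$: this pair is cross-intersecting and $|\mathcal{A}|+|\mathcal{B}|=2(N-1)=\binom{N}{2}-\binom{N-2}{2}+1$, so equality also occurs at $|\mathcal{A}|=N-1$; and when $N=a+b$ every splitting of $\binom{[N]}{a}$ into a family and the complements of its complement-free part gives equality. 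Concretely, for $N=6$, $a=b=2$ one computes $\varphi(1)=\varphi(5)=\varphi(9)=-5=1-\binom{4}{2}$, three distinct equality points. Worse, the failure of $\varphi(t)\leqslant 1-\binom{N-a}{b}$ for large $t$ is not confined to the single ``degenerate endpoint'' $\mathcal{A}=\binom{[N]}{a}$ that you mention: in the same example the inequality already fails at $t=11$ (where $\varphi(11)=-4$), while the nonemptiness of $\mathcal{B}$ only forces $t\leqslant\binom{N}{a}-\binom{N-b}{a}=9$. So the quantitative restriction coming from $\mathcal{B}\neq\emptyset$ must be threaded through the whole comparison, not invoked once to discard one endpoint; carrying this out (which is what Frankl and Tokushige actually do) is the missing proof. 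A minor further point: the $C_{ij}$-shift step buys you nothing on your main route, since Kruskal--Katona needs no compression hypothesis; what your plan of ``evaluating at colex initial segments'' actually requires is Hilton's lemma that lex initial segments of the same sizes remain cross-intersecting, and that does not follow from the $C_{ij}$ shifts alone.
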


The above inequality provides a useful tool for handling our problems.

\begin{theorem}\label{2K-3}
For $n = 2k-3$, we have $\alpha(m,n,k) = h(m,n,k)$. All the maximum
families are of the form $\mathcal{H}_t$ for some $t \in [n]$.
\end{theorem}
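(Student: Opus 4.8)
The plan is to exploit the canonical partition $\mathcal{F} = \binom{[n]}{k} \cup \mathcal{F}_1 \cup \mathcal{F}_2$, which for $n=2k-3$ has only two nontrivial layers since $2k-n-1=2$. Write $N=2k-3$ and identify the outside ground set $[n+1,m]$ with a set $P$ of $M=m-2k+3$ points. I would encode the two layers by their traces on $[n]$: for $b\in P$ set $\mathcal{G}_b=\{A\in\binom{[n]}{k-1}\mid A\cup\{b\}\in\mathcal{F}_1\}$, and for a pair $\{c,c'\}\subseteq P$ set $\mathcal{H}_{c,c'}=\{A\in\binom{[n]}{k-2}\mid A\cup\{c,c'\}\in\mathcal{F}_2\}$, so that $|\mathcal{F}_1|=\sum_b|\mathcal{G}_b|$ and $|\mathcal{F}_2|=\sum_{\{c,c'\}}|\mathcal{H}_{c,c'}|$. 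A first observation is that all intersecting conditions involving $\binom{[n]}{k}$ are automatic, and that $\mathcal{F}_1$ is automatically intersecting, because any two $(k-1)$-subsets of a $(2k-3)$-set meet. Hence the only genuine constraints are that $\mathcal{F}_2$ is intersecting and that $\mathcal{F}_1,\mathcal{F}_2$ cross-intersect; unwinding the latter shows that whenever $b\notin\{c,c'\}$ the families $\mathcal{G}_b$ and $\mathcal{H}_{c,c'}$ are cross-intersecting in $\binom{[n]}{k-1}$ and $\binom{[n]}{k-2}$.

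The main obstacle is that bounding the two layers separately by Lemma \ref{lem_max} is too lossy. The layer bound $|\mathcal{F}_2|\leqslant\binom{N-1}{k-3}\binom{M}{2}$ is already exactly the term that $h$ requires, but the separate bound $|\mathcal{F}_1|\leqslant\binom{N}{k-1}M$ overshoots the target term $\binom{N-1}{k-2}M$. The excess size of $\mathcal{F}_1$ must therefore be paid for by its cross-intersecting coupling with $\mathcal{F}_2$, and the crux is to quantify that trade-off exactly.

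My device is to apply Theorem \ref{XFM} to each cross-intersecting incidence. With $X=[n]$, $a=k-2$, $b=k-1$ (so $|X|=2k-3=a+b$), it yields $|\mathcal{G}_b|+|\mathcal{H}_{c,c'}|\leqslant\binom{N}{k-1}$ for every incidence with $b\notin\{c,c'\}$; the degenerate empty cases also obey this since $\binom{N}{k-2}=\binom{N}{k-1}$. Summing over all $M\binom{M-1}{2}$ such incidences and double counting gives $\binom{M-1}{2}|\mathcal{F}_1|+(M-2)|\mathcal{F}_2|\leqslant\binom{N}{k-1}M\binom{M-1}{2}$, that is $|\mathcal{F}_1|+\tfrac{2}{M-1}|\mathcal{F}_2|\leqslant\binom{N}{k-1}M$. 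Writing $|\mathcal{F}_1|+|\mathcal{F}_2|=\bigl(|\mathcal{F}_1|+\tfrac{2}{M-1}|\mathcal{F}_2|\bigr)+\tfrac{M-3}{M-1}|\mathcal{F}_2|$ and feeding in $|\mathcal{F}_2|\leqslant\binom{N-1}{k-3}\binom{M}{2}$ to absorb the leftover term, a short computation reduces the desired $|\mathcal{F}_1|+|\mathcal{F}_2|\leqslant h-\binom{N}{k}$ to the single identity $\binom{2k-4}{k-1}=\binom{2k-4}{k-3}$, which holds because $(2k-4)-(k-1)=k-3$. Adding back $\binom{N}{k}$ gives $|\mathcal{F}|\leqslant h(m,n,k)$, and with the reverse inequality supplied by $\mathcal{H}_t$ we conclude $\alpha(m,n,k)=h(m,n,k)$.

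For the characterization I would trace the equalities. Equality forces $|\mathcal{F}_2|$ to be maximum, so for $m$ large Theorem \ref{DIR} (as in Lemma \ref{lem_max}) forces $f_2(\mathcal{F}_2)$ to be the projection pre-image of a maximum independent set of ${\sf KG}(n,k-2)$; by the Erd\H{o}s--Ko--Rado uniqueness (valid since $2(k-2)<n$) there is a single $t\in[n]$ with $\mathcal{H}_{c,c'}=\{A\in\binom{[n]}{k-2}\mid t\in A\}$ for every pair. The cross-intersecting constraint then forces every $A\in\mathcal{G}_b$ to contain $t$: otherwise the complement $[n]\setminus A$, a $(k-2)$-set avoiding $t$, would lie in some $\mathcal{H}_{c,c'}$ with $b\notin\{c,c'\}$ and be disjoint from $A$. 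Thus $\mathcal{G}_b\subseteq\{A\in\binom{[n]}{k-1}\mid t\in A\}$, and the value $|\mathcal{G}_b|=\binom{N-1}{k-2}$ forced by the tight incidences upgrades this to equality, whence $\mathcal{F}=\mathcal{H}_t$. I expect the delicate points to be the exact bookkeeping that makes the two bounds meet through the binomial identity, and verifying that ``$m$ sufficiently large'' is precisely what places us in the correct regime of Lemma \ref{lem_max} and forces the $G$-side maximum in Theorem \ref{DIR}.
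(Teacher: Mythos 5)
Your main counting argument is a genuinely different route from the paper's, and where it applies it is correct. The paper partitions $\mathcal{F}_1\cup\mathcal{F}_2$ by traces on $[n]$ into complementary pairs $(A_j,A_j'=[n]\setminus A_j)$ and applies Theorem \ref{XFM} to the \emph{outside} parts in $[2k-2,m]$, obtaining the local bound $|\mathcal{F}(A_j)|+|\mathcal{F}(A_j')|\leqslant m-2k+3$ whenever $\mathcal{F}(A_j)\neq\emptyset$; you instead partition by outside parts and apply Theorem \ref{XFM} to the \emph{inside} traces, then average over incidences. I checked your double counting and the reduction to $\binom{2k-4}{k-1}=\binom{2k-4}{k-3}$: this is right, and the equality analysis via Theorem \ref{DIR} and EKR uniqueness also goes through (modulo a slip: if $A\in\mathcal{G}_b$ avoids $t$, its complement \emph{contains} $t$ --- which is exactly why it lies in every $\mathcal{H}_{c,c'}$ --- it does not ``avoid $t$''; and, like the paper, you should dispose of $k\leqslant 4$ by citing the known EKR and Hilton--Milner cases, since your threshold arithmetic wants $k\geqslant 5$).

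However, there is a genuine gap: under the paper's standing convention the theorem covers every $m>2k$, in particular $m=2k+1$, and there your proof breaks down. You need the layer bound $|\mathcal{F}_2|\leqslant\binom{2k-4}{k-3}\binom{m-2k+3}{2}$, which Lemma \ref{lem_max} supplies only in the regime $m\geqslant(2k-3)k/(k-2)$, i.e.\ $m\geqslant 2k+2$ for $k\geqslant 5$. At $m=2k+1$ this bound is not merely unproved but false: Lemma \ref{lem_max} then gives $\alpha({\sf KG}(2k-3,k-2)\times{\sf KG}(4,2))=3\binom{2k-3}{k-2}$, attained for instance by $\mathcal{F}_2=\{A\cup B\mid A\in\binom{[2k-3]}{k-2},\ p\in B\in\binom{[2k-2,2k+1]}{2}\}$ for a fixed outside point $p$, and $3\binom{2k-3}{k-2}=3\binom{2k-4}{k-2}+3\binom{2k-4}{k-3}>6\binom{2k-4}{k-3}$. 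Feeding this weaker bound into your computation overshoots $h(2k+1,2k-3,k)$, so both your bound and your characterization fail to close at $m=2k+1$. The paper handles this with a separate argument (its Case 2): if $|\mathcal{F}|>h$, then more than $\binom{2k-4}{k-3}$ complementary pairs satisfy $|\mathcal{F}(A_j)|+|\mathcal{F}(A_j')|\geqslant 5$, which by the paper's Observation forces $\mathcal{F}(A_j)=\emptyset$ for each such $j$; then more than $\binom{2k-4}{k-3}$ of the $(k-2)$-sets $A_j'$ carry at least five members each, Theorem \ref{EKR} yields two disjoint such $A_j'$, and from these one extracts two disjoint members of $\mathcal{F}$, a contradiction. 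You would need to add an argument of this kind for $m=2k+1$, or else weaken your claim to $m\geqslant 2k+2$.
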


\begin{proof}
Let $\mathcal{F}$ be a largest $(m,2k-3,k)$-intersecting family with
canonical partition $\binom{[2k-3]}{k} \cup \mathcal{F}_1 \cup \mathcal{F}_2$.
We further partition $\mathcal{F}_1$ and $\mathcal{F}_2$ into subfamilies.
Let $N=\binom{2k-3}{k-1}$. Partition $\binom{[2k-3]}{k-1}$ into $A_1, 
\ldots , A_N$ and $\binom{[2k-3]}{k-2}$ into $A_1', \ldots , A_N'$ such that 
$A_j \cup A_j' = [2k-3]$ for all $j$. Define $\mathcal{F}(A_j) = \{F \in 
\mathcal{F} \mid F \cap [2k-3] = A_j \}$ and $\mathcal{F}(A_j') = \{F \in 
\mathcal{F} \mid F \cap [2k-3] = A_j'\}$. 
Then 
\[
\mathcal{F} = \binom{[2k-3]}{k} \cup \Bigg( \bigcup_{j=1}^N \Big( \mathcal{F}(A_j) 
\cup \mathcal{F}(A_j') \Big) \Bigg).
\]

\bigskip

\noindent 
{\bf Observation}. If $\mathcal{F}(A_j) \neq \emptyset$, then
$|\mathcal{F}(A_j)| + |\mathcal{F}(A_j')| \leqslant m-2k+3$.

\bigskip

If $\mathcal{F}(A_j')=\emptyset$, then $|\mathcal{F}(A_j)| + 
|\mathcal{F}(A_j')| = |\mathcal{F}(A_j)| \leqslant |\{A_j \cup \{b\} 
\mid b \in [2k-2,m] \}| = m-2k+3$. If $\mathcal{F}(A_j') \neq \emptyset$, 
then $\{\{b\} \mid A_j \cup \{b\} \in \mathcal{F}(A_j)\} \subseteq 
\binom{[2k-2,m]}{1}$ and $\{\{b_1,b_2\} \mid A_j' \cup \{b_1,b_2\} \in
\mathcal{F}(A_j')\} \subseteq \binom{[2k-2,m]}{2}$ are cross-intersecting.
By Theorem \ref{XFM}, $|\mathcal{F}(A_j)| + |\mathcal{F}(A_j')| \leqslant
\binom{m-2k+3}{2} - \binom{m-2k+2}{2} + 1 = m-2k+3$. Hence, the observation
holds.

\bigskip

Now suppose that all of $\mathcal{F}(A_1), \ldots , \mathcal{F}(A_s)$ are
nonempty, yet $\mathcal{F}(A_{s+1}) = \cdots = \mathcal{F}(A_N) = \emptyset$.
Then we have
\begin{equation}\label{EQN:UPB}
|\mathcal{F}|\leqslant \binom{2k-3}{k}+s(m-2k+3)+(N-s)\binom{m-2k+3}{2}.
\end{equation}

{\em Case 1}.\ $m \geqslant 2k+2$.

Since $h(m,2k-3,k) \leqslant |\mathcal{F}|$ and $N = \binom{2k-4}{k-2} + 
\binom{2k-4}{k-3}$, it follows $s \leqslant \binom{2k-4}{k-2}$. We may 
assume $k \geqslant 5$ because $\alpha(m,3,3)$ and $\alpha(m,5,4)$ are known. 
It follows that $m \geqslant (2k-3)k/(k-2)$. Together with $2(k-2) < 2k-3$ and 
$4 < m-2k+3$, we have $\alpha({\sf KG}(2k-3,k-2) \times {\sf KG}(m-2k+3,2))
= \binom{2k-4}{k-3}\binom{m-2k+3}{2}$ by Lemma \ref{lem_max}. Recall that
$f_2(\mathcal{F}_2)$ is an independent set of ${\sf KG}(2k-3,k-2) \times 
{\sf KG}(m-2k+3,2)$. Hence, $|\mathcal{F}_2| \leqslant \binom{2k-4}{k-3}
\binom{m-2k+3}{2}$. If $s < \binom{2k-4}{k-2}$, then $|\mathcal{F}_1| =
\sum_{j=1}^{s}|\mathcal{F}(A_j)| < \binom{2k-4}{k-2}(m-2k+3)$. This leads 
to $|\mathcal{F}| = \binom{2k-3}{k} + |\mathcal{F}_1| + |\mathcal{F}_2| <
h(m,2k-3,k)$, a contradiction. Thus, $s = \binom{2k-4}{k-2}$ and 
$\alpha(m,2k-3,k) = h(m,2k-3,k)$ for $m \geqslant 2k+2$.

{\em Case 2}.\ $m = 2k+1$.

Suppose That $\binom{2k-3}{k} + \binom{2k-4}{k-2}\binom{4}{1} +
\binom{2k-4}{k-3}\binom{4}{2} = h(2k+1,2k-3,k) < |\mathcal{F}|$. 
Since $N = \binom{2k-4}{k-2} + \binom{2k-4}{k-3}$, it follows from inequality 
(\ref{EQN:UPB}) that $|\{j \mid |\mathcal{F}(A_j)| + |\mathcal{F}(A_j')|
\geqslant 5\}| > \binom{2k-4}{k-3}$. By our Observation, $|\mathcal{F}(A_j)| 
+ |\mathcal{F}(A_j')| \geqslant 5$ implies $\mathcal{F}(A_j) = \emptyset$ 
for any $j$. Thus $|\{A_j' \mid |\mathcal{F}(A_j')| \geqslant 5\}| > \binom{2k-4} 
{k-3}$. By Theorem \ref{EKR}, there exist disjoint sets $A_{j_1}'$ 
and $A_{j_2}'$ in $\{A_j' \mid |\mathcal{F}(A_j')| \geqslant 5\} \subseteq
\binom{[2k-3]}{k-2}$. Then it is easy to find two disjoint sets, one in
$\mathcal{F}(A_{j_1}')$ and the other in $\mathcal{F}(A_{j_2}')$. This 
contradicts the assumption that $\mathcal{F}$ is intersecting. Therefore
$|\mathcal{F}| = h(2k+1,2k-3,k)$.

Let us examine the maximum families. Note that $\alpha(m,2k-3,k) = h(m,2k-3,k)$
implies that inequality (\ref{EQN:UPB}) becomes equality, $s=\binom{2k-4}{k-2}$,
and $N-s=\binom{2k-4}{k-3}$. It follows that $\mathcal{F}(A_j') =
\{A_j' \cup B \mid B \in \binom{[2k-2,m]}{2}\}$ for $s < j \leqslant N$.
Since there exist non-intersecting pair $B_1$ and $B_2$ in $\binom{[2k-2,m]}{2}$,
$\{A_j' \mid s+1 \leqslant j \leqslant N\}$ must be a maximum intersecting 
family in view of its cardinality. Let $t \in \cap_{j=s+1}^N A_j'$ by Theorem
\ref{EKR}. For $1\leqslant j\leqslant s$, if there exists
$\mathcal{F}(A_{j_1}') \neq \emptyset$ for some $1 \leqslant j_1 \leqslant s$,
then there exists some $A_{j_2}'$, $s+1 \leqslant j_2 \leqslant N$ such that
$A_{j_1}' \cap A_{j_2}' = \emptyset$. We can find two disjoint sets, one in
$\mathcal{F}(A_{j_1}')$ and the other in $\mathcal{F}(A_{j_2}')$, a contradiction.
Therefore we have $\mathcal{F}(A_j') = \emptyset$ and $\mathcal{F}(A_j) = 
\{A_j \cup B \mid B \in \binom{[2k-2,m]}{1}\}$ for $1 \leqslant j \leqslant s$.
Suppose that $t \not \in A_{j_0}$ for some $1 \leqslant j_0 \leqslant s$.
Then $t \in A_{j_0}'$. For any $A_j'$, $s+1 \leqslant j \leqslant N$, we have
$A_{j_0}' \neq A_j'$ since $\mathcal{F}(A_j) = \emptyset$, yet 
$\mathcal{F}(A_{j_0}) \neq \emptyset$. Then $\{A_{j_0}', A_{s+1}, \ldots , 
A_N\}$ is an intersecting family in $\binom{[2k-3]}{k-2}$ having more than 
$\binom{2k-4}{k-3}$ members, a contradiction. Hence $\mathcal{F}$ has the form
$\mathcal{H}_t$ for $t \in [2k-3]$.
\end{proof}

%
\section{The case for $m$ sufficiently large}
%

We have solved Problem \ref{prob_1} for $n = 2k-1, 2k-2$, and $2k-3$.
In this section, we are going to assume that $k \leqslant n < 2k-3$ and
solve the problem when $m$ is sufficiently large.

Let $r, l, n$ be positive integers satisfying $r < l \leqslant n/2$, 
and let $X_1$ and $X_2$ be disjoint $n$-sets. Wang and Zhang \cite{WANZHA}
characterized the maximum intersecting families $\mathcal{F} \subseteq 
\{ F \in \binom{X_1 \cup X_2}{r+l} \mid$ $ |F \cap X_1| = r \mbox{ or } 
l \}$ of maximum cardinality. We consider a similar extremal problem.

\begin{problem}\label{prob_3}
Given integers $m, n, k, c, d$ satisfying $n < m$, $k \leqslant n < 2k-3$,
$d < c < k$, and $c + d = n$, characterize the intersecting families
$\mathcal{F} \subseteq \{F \in \binom{[m]}{k} \mid |F \cap [n]| = c
\mbox{ or } d\}$ of maximum cardinality.
\end{problem}

We can derive an asymptotic solution of the above problem as follows.

\begin{lemma}\label{LEM:FUF}
For given $n, k, c, d$ satisfying conditions in the above problem,
if $m$ is sufficiently large, then a maximum intersecting family 
$\mathcal{F}$ has the form $\{A \cup B \cup \{t\} \mid A \in \binom{[n] 
\setminus \{t\}}{c-1}, B \in \binom{[n+1,m]}{k-c}\} \cup 
\{A \cup B \cup \{t\} \mid A \in \binom{[n] \setminus \{t\}}{d-1},
B \in \binom{[n+1,m]}{k-d}\}$ for some $t \in[n]$, and hence 
$|\mathcal{F}| = \binom{n-1}{c-1}\binom{m-n}{k-c} + \binom{n-1}{d-1} 
\binom{m-n}{k-d}$.
\end{lemma}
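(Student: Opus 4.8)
The plan is to split the ground set as $Y=[n]$ and $Z=[n+1,m]$ and to decompose $\mathcal{F}=\mathcal{F}_c\cup\mathcal{F}_d$, where $\mathcal{F}_c=\{F\in\mathcal{F}\mid |F\cap Y|=c\}$ and $\mathcal{F}_d=\{F\in\mathcal{F}\mid |F\cap Y|=d\}$. For each $d$-subset $A'$ of $Y$ write $\bar{A'}=Y\setminus A'$ (a $c$-set) and set $\mathcal{B}'_{A'}=\{B\mid A'\cup B\in\mathcal{F}_d\}$, $\mathcal{B}_{\bar{A'}}=\{B\mid \bar{A'}\cup B\in\mathcal{F}_c\}$, with $x_{A'}=|\mathcal{B}'_{A'}|$ and $y_{A'}=|\mathcal{B}_{\bar{A'}}|$. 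Since complementation is a bijection between $c$-sets and $d$-sets, $|\mathcal{F}|=\sum_{A'}(x_{A'}+y_{A'})$. The crucial elementary reduction is that $2d<n<2c$: any two $c$-sets meet inside $Y$, and a $c$-set and a $d$-set are disjoint exactly when they are complementary. Hence $\mathcal{F}_c$ is automatically intersecting, and the only genuine conditions are (I) that $\mathcal{F}_d$ be intersecting, and (II) that for each complementary pair the families $\mathcal{B}_{\bar{A'}}$ and $\mathcal{B}'_{A'}$ be cross-intersecting in $Z$. Throughout I abbreviate $M_c=\binom{m-n}{k-c}$, $M_d=\binom{m-n}{k-d}$, and note $M_d/M_c\to\infty$ as $m\to\infty$ because $k-c<k-d$.

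Two inputs drive the argument. First, mapping $\mathcal{F}_d$ into ${\sf KG}(n,d)\times{\sf KG}(m-n,k-d)$ exactly as with the canonical injection $f_i$, two members of $\mathcal{F}_d$ are disjoint iff their images are adjacent, so $f_i(\mathcal{F}_d)$ is independent; since $2d<n$ and $m$ is large, Lemma~\ref{lem_max} and Theorem~\ref{DIR} give $|\mathcal{F}_d|\leqslant\binom{n-1}{d-1}M_d$, with equality forcing (via the uniqueness clause of Theorem~\ref{DIR} and Theorem~\ref{EKR}) that $\mathcal{F}_d$ be a \emph{star}: all $d$-sets through a common $t$, each carrying a full $Z$-part. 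Second, I use a meeting bound from (II). Put $\theta=M_d-\binom{m-n-k+c}{k-d}$, the number of $(k-d)$-subsets of $Z$ meeting a fixed $(k-c)$-set; so $\theta=O(m^{k-d-1})$. If $y_{A'}>0$, fix $B_0\in\mathcal{B}_{\bar{A'}}$; every member of $\mathcal{B}'_{A'}$ meets $B_0$, whence $x_{A'}\leqslant\theta$. Thus defining $\mathcal{L}=\{A'\mid x_{A'}>\theta\}$, we have $y_{A'}=0$ on $\mathcal{L}$ and $x_{A'}\leqslant\theta$ off $\mathcal{L}$. Consequently $|\mathcal{F}_c|=\sum_{A'}y_{A'}\leqslant(\binom{n}{d}-|\mathcal{L}|)M_c$.

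The proof then splits on $L=|\mathcal{L}|$. If $L\geqslant\binom{n-1}{d-1}$, I combine the \emph{exact} Kneser bound with the $\mathcal{F}_c$ estimate: $|\mathcal{F}|\leqslant\binom{n-1}{d-1}M_d+(\binom{n}{d}-L)M_c\leqslant\binom{n-1}{d-1}M_d+\binom{n-1}{c-1}M_c$, using $\binom{n}{d}-\binom{n-1}{d-1}=\binom{n-1}{d}=\binom{n-1}{c-1}$. If instead $L<\binom{n-1}{d-1}$, I bound $\mathcal{F}_d$ by splitting at $\mathcal{L}$, namely $|\mathcal{F}_d|\leqslant L\,M_d+(\binom{n}{d}-L)\theta$, so $|\mathcal{F}|\leqslant L\,M_d+(\binom{n}{d}-L)(\theta+M_c)$; this is increasing in $L$ for large $m$ and hence at most its value at $L=\binom{n-1}{d-1}-1$, which equals the target minus $M_d+O(m^{k-d-1})$, a negative quantity once $m$ is large. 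Either way $|\mathcal{F}|\leqslant\binom{n-1}{d-1}M_d+\binom{n-1}{c-1}M_c=|\mathcal{F}|$ as claimed, and equality forces the first case to be tight: $\mathcal{F}_d$ is a star at some $t$, so $\mathcal{L}=\{d\text{-sets}\ni t\}$ has size exactly $\binom{n-1}{d-1}$, the only $c$-sets carrying a nonempty $Z$-part are those $A$ with $\bar A\notin\mathcal{L}$ (equivalently $t\in A$), and each such $c$-set must carry a full $Z$-part. This is precisely the family described in the statement.

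The main obstacle is the lower-order bookkeeping. The naive route — bounding every complementary pair by Theorem~\ref{XFM} — leaves a per-pair slack of size $\theta=\Theta(m^{k-d-1})$, and since $k-c\leqslant k-d-1$ this slack dominates the honest $c$-contribution $\Theta(m^{k-c})$; summed over all $\binom{n}{d}$ pairs it would wreck the exact bound. The device that defeats it is to never sum the slack: in the case $L\geqslant\binom{n-1}{d-1}$ I invoke the \emph{exact} independence number of the direct product for $\mathcal{F}_d$, so $\theta$ disappears entirely, while in the complementary case the loss $-M_d$ of a full leading term overwhelms every $O(m^{k-d-1})$ error. I expect the two finer points needing care to be the verification that Lemma~\ref{lem_max} lands in its $m\geqslant nk/d$ branch (so the dominant independent set comes from the ${\sf KG}(n,d)$ factor), and the correct use of the uniqueness clause of Theorem~\ref{DIR} to pin down the extremal $\mathcal{F}_d$ as a genuine star, which is what ultimately selects the single center $t$ shared by the $c$- and $d$-parts.
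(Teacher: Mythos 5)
Your proof is correct, and it keeps the paper's overall architecture: the partition of $\mathcal{F}$ by trace on $[n]$, the complementary pairing of $c$-sets with $d$-sets, the exact product bound $|\mathcal{F}_d|\leqslant\binom{n-1}{d-1}\binom{m-n}{k-d}$ via Lemma \ref{lem_max} and Theorem \ref{DIR}, a large-$m$ dichotomy in which the loss of one full $\binom{m-n}{k-d}$ term swamps all $O(m^{k-d-1})$ errors, and a final structure step identifying the common element $t$. The genuine difference is the treatment of the cross-intersecting pairs. The paper applies the Frankl--Tokushige inequality (Theorem \ref{XFM}) to every pair with both sides nonempty and then determines the exact values of its two counters $r$ and $s$; you instead use only the elementary ``meeting bound'' --- if a pair's $c$-side is nonempty, its $d$-side has at most $\theta=\binom{m-n}{k-d}-\binom{m-n-k+c}{k-d}$ members --- and organize the case distinction around the threshold family $\mathcal{L}$. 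Your per-pair estimate $\theta+\binom{m-n}{k-c}$ is weaker than the Frankl--Tokushige bound $\theta+1$, but both are $O(m^{k-d-1})$, which is all the asymptotic argument needs; what your route buys is that Theorem \ref{XFM} becomes unnecessary for this lemma, so the proof rests only on Zhang's theorem and Erd\H{o}s--Ko--Rado. A second, minor divergence: in the equality analysis the paper recovers the star structure by applying Theorem \ref{EKR} to the family of full $d$-traces, whereas you invoke the uniqueness clause of Theorem \ref{DIR} directly; both are legitimate, and your version has the small advantage of simultaneously certifying that every $d$-trace through $t$ carries a full $Z$-part.
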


\begin{proof}
Let $\mathcal{F}$ be a maximum intersecting family. Any special form 
stated in the lemma is an intersecting family, hence its cardinality
$\binom{n-1}{c-1}\binom{m-n}{k-c} + \binom{n-1}{d-1} \binom{m-n}{k-d}$
supplies a lower bound for $|\mathcal{F}|$.

Let us consider upper bounds for $|\mathcal{F}|$. First partition 
$\mathcal{F}$ into two subfamilies $\mathcal{F}_{k-c}$ and $\mathcal{F}_{k-d}$
such that $\mathcal{F}_{k-c} = \{F \in \mathcal{F} \mid |F \cap [n]| = c\}$
and $\mathcal{F}_{k-d} = \{F \in \mathcal{F} \mid |F\cap [n]| =d\}$.
For $\mathcal{F}_{k-d}$, we consider the injection from $\mathcal{F}_{k-d}$
to the vertex set of ${\sf KG}(n,d) \times {\sf KG}(m-n,k-d)$ defined prior
to Lemma \ref{lem_max}. We may choose $m$ sufficiently
large so that $2(k-d) < m-n$ and $m > nk/d$ hold. By Lemma \ref{lem_max}, 
we have $|\mathcal{F}_{k-d}| \leqslant \alpha({\sf KG}(n,d)) |{\sf KG}(m-n,k-d)|
= \binom{n-1}{d-1}\binom{m-n}{k-d}$. Consider a further partition on
$\mathcal{F}_{k-c}$ and $\mathcal{F}_{k-d}$. Denote $N = \binom{n}{c}$. For $A_j
\in \binom{[n]}{c}$ and $A_j' = [n] \setminus A_j$, $1\leqslant j\leqslant N$, 
let $\mathcal{F}(A_j) = \{F \in \mathcal{F}_{k-c} \mid F \cap[n] = A_j\}$ and
$\mathcal{F}(A_j') = \{F \in \mathcal{F}_{k-d} \mid F \cap [n] = A_j'\}$.
Since $A_j \cap A_j' = \emptyset$, the two families $\{B \in \binom{[n+1,m]}{k-c}
\mid A_j \cup B \in \mathcal{F}\}$ and $\{B \in \binom{[n+1,m]}{k-d} \mid A_j'
\cup B \in \mathcal{F}\}$ are cross-intersecting of size $|\mathcal{F}(A_j)|$ 
and $|\mathcal{F}(A_j')|$, respectively. Let $r \leqslant s$ be integers such 
that $\mathcal{F}(A_j) = \emptyset$ for $1 \leqslant j \leqslant r$,
$\mathcal{F}(A_j)$ and $\mathcal{F}(A_j')$ are nonempty for $r+1 \leqslant j
\leqslant s$ and $\mathcal{F}(A_j') = \emptyset$ for $s+1 \leqslant j \leqslant
N$. Then
\begin{align*}
|\mathcal{F}| &=  \sum_{j=1}^r|\mathcal{F}(A_j')| +
                    \sum_{j=r+1}^s(|\mathcal{F}(A_j)| + |\mathcal{F}(A_j')|) +
                    \sum_{j=s+1}^N |\mathcal{F}(A_j)|\\
              &\leqslant  r\binom{m-n}{k-d} + (s-r)\left( \binom{m-n}{ k-d} -
                    \binom{m-k-d}{k-d} + 1 \right) \\
              &    +(N-s)\binom{m-n}{k-c}.
\end{align*}
We first show that $r=\binom{n-1}{d-1}$. If $r > \binom{n-1}{d-1}$, then
\begin{align*}
|\mathcal{F}| &=  \sum_{j=r+1}^N|\mathcal{F}(A_j)| + \sum_{j=1}^s
                    |\mathcal{F}(A_j')| \\
              &<  (N-r)\binom{m-n}{k-c} + |\mathcal{F}_{k-d}| \\
              &\leqslant  \binom{n-1}{c-1}\binom{m-n}{k-c} + \binom{n-1}{d-1}
                    \binom{m-n}{k-d},
\end{align*} 
which cannot be true. For $m$ sufficient large, say $m > 2n(n/2)^{k-d}\binom{n} 
{\lfloor n/2 \rfloor}$, we have
\begin{align*}
&    (s-r) \left( \binom{m-n}{k-d} - \binom{m-k-d}{k-d} + 1 \right) +
      (N-s)\binom{m-n}{k-c} \\
&<  (s-r) \left( \frac{m^{k-d}}{(k-d)!} - \frac{(m-2n)^{k-d}}{(k-d)!} + 1
      \right) + (N-s)m^{k-c} \\
&<  (s-r)(2nm^{k-d-1}+1) + (N-s)m^{k-c} \\
&<  N(2n)m^{k-d-1} \\
&\leqslant  \binom{n}{\lfloor n/2 \rfloor}(2n)(n/2)^{k-d} \frac{1}{m}
       \frac{m^{k-d}}{(n/2)^{k-d}} \\
&<  \binom{m-n}{k-d}.
\end{align*}
If $r < \binom{n-1}{d-1}$, then $|\mathcal{F}| < (1+r)\binom{m-n}{k-d}
\leqslant \binom{n-1}{d-1}\binom{m-n}{k-d}$, which is impossible. Hence
$r = \binom{n-1}{d-1}$. Now we show that $s = \binom{n-1}{d-1}$. Note 
that $s \geqslant r = \binom{n-1}{d-1}$. Suppose $s > \binom{n-1}{d-1}$. 
Then by Theorem \ref{DIR}, the image of the injection from $\mathcal{F}_{k-d}$
to ${\sf KG}(n,d) \times {\sf KG}(m-n,k-d)$ cannot be a maximal independent 
set and $|\mathcal{F}_{k-d}| < \binom{n-1}{d-1}\binom{m-n}{k-d}$. This leads 
to $|\mathcal{F}| \leqslant (N-r)\binom{m-n}{k-c} + |\mathcal{F}_{k-d}| < 
\binom{n-1}{c-1}\binom{m-n}{k-c} + \binom{n-1}{d-1}\binom{m-n}{k-d}$,
contradicting the lower bound of $|\mathcal{F}|$ again. Since $r = s = 
\binom{n-1}{d-1}$, we have $|\mathcal{F}| \leqslant \binom{n-1}{d-1}
\binom{m-n}{k-d} + \binom{n-1}{c-1}\binom{m-n}{k-c}$. The equality must 
hold as the right hand side is the known lower bound of $|\mathcal{F}|$.

When $\mathcal{F}$ has maximum cardinality, $\mathcal{F}(A_j) = \{A_j \cup B 
\mid B \in \binom{[n+1,m]}{k-c}\}$ for $j > \binom{n-1}{d-1}$ and
$\mathcal{F}(A_j') = \{A_j' \cup B \mid B \in \binom{[n+1,m]}{k-d}\}$ for
$j \leqslant \binom{n-1}{d-1}$. Now $\{A_j' \mid 1 \leqslant j \leqslant
\binom{n-1}{d-1}\} \subseteq \binom{[n]}{k}$ is a maximum intersecting 
family. Thus, there is a common element $t \in A_j'$ for $1 \leqslant j 
\leqslant \binom{n-1}{d-1}$. On the other hand, no $A_j'$ contains $t$ 
for $j > \binom{n-1}{d-1}$. That implies $t \in A_j$. So $t$ belongs to 
every member of $\mathcal{F}$.
\end{proof}

\begin{theorem}
If integers $n$ and $k$ satisfy $k \leqslant n < 2k-3$, then $\alpha(m,n,k) 
= h(m,n,k)$ holds for sufficiently large $m$. For such a large $m$, 
a maximum $(m,n,k)$-intersecting family is of the form $\mathcal{H}_t$ 
for some $t \in[n]$.
\end{theorem}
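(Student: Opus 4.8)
The plan is to bound $|\mathcal{F}|$ by grouping the levels of the canonical partition into complementary pairs and applying the preceding Lemma \ref{LEM:FUF} to each pair. Write $\mathcal{F}=\binom{[n]}{k}\cup\bigcup_{i=1}^{2k-n-1}\mathcal{F}_i$, and set $c_i=k-i=|F\cap[n]|$ for $F\in\mathcal{F}_i$. I pair level $i$ with level $j=2k-n-i$; this again lies in $\{1,\dots,2k-n-1\}$, and setting $c=c_i$, $d=c_j$ gives $c+d=n$. Thus $\mathcal{F}_i\cup\mathcal{F}_j$ is precisely an intersecting family of the kind studied in Problem \ref{prob_3}, so for large $m$ Lemma \ref{LEM:FUF} yields $|\mathcal{F}_i|+|\mathcal{F}_j|\leqslant \binom{n-1}{k-i-1}\binom{m-n}{i}+\binom{n-1}{k-j-1}\binom{m-n}{j}$. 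The involution $i\mapsto 2k-n-i$ has a fixed point only when $n$ is even, namely $i_0=k-n/2$, where $c=d=n/2$ and Lemma \ref{LEM:FUF} does not apply; there I instead bound $|\mathcal{F}_{i_0}|$ directly, using that $f_{i_0}(\mathcal{F}_{i_0})$ is independent in ${\sf KG}(n,n/2)\times{\sf KG}(m-n,i_0)$, so by Lemma \ref{lem_max} it has size at most $\binom{n-1}{n/2-1}\binom{m-n}{i_0}$. Summing the pair bounds together with the fixed-point bound reproduces exactly $\sum_{i=1}^{2k-n-1}\binom{n-1}{k-i-1}\binom{m-n}{i}$, whence $|\mathcal{F}|\leqslant h(m,n,k)$; combined with the lower bound $\alpha(m,n,k)\geqslant h(m,n,k)$ this gives $\alpha(m,n,k)=h(m,n,k)$.

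For the characterization, equality $|\mathcal{F}|=h(m,n,k)$ forces equality in every pair bound, so Lemma \ref{LEM:FUF} applies to each pair $\{i,2k-n-i\}$ and supplies an element $t_{(i,j)}\in[n]$ through which both $\mathcal{F}_i$ and $\mathcal{F}_j$ are the full stars; at the fixed point Theorem \ref{DIR} and Theorem \ref{EKR} show the first projection of $f_{i_0}(\mathcal{F}_{i_0})$ is a maximum intersecting family $I\subseteq\binom{[n]}{n/2}$, with $\mathcal{F}_{i_0}=\{A\cup B\mid A\in I,\ B\in\binom{[n+1,m]}{i_0}\}$. It remains to show all the distinguished elements coincide.

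The coherence step rests on the top level $L=2k-n-1$, whose members have the shortest $[n]$-trace $c_L=n-k+1$. Since $n<2k-3$ forces $L\geqslant 3$ and $L\neq i_0$, the pair containing $L$ is $\{1,L\}$, and Lemma \ref{LEM:FUF} gives an element $t:=t_{(1,L)}$ with $\mathcal{F}_L=\{A\cup B\cup\{t\}\mid A\in\binom{[n]\setminus\{t\}}{n-k},\ B\in\binom{[n+1,m]}{L}\}$. For any other level $i'$ one has $L+i'\geqslant 2k-n$, equivalently $c_L+c_{i'}\leqslant n$, and I argue: if the $[n]$-trace $P'$ of some member of $\mathcal{F}_{i'}$ did not contain $t$, then since $c_{i'}=k-i'\leqslant k-1$ gives $n-c_{i'}-1\geqslant n-k$, I could pick the $(n-k)$-subset $A$ inside $[n]\setminus(P'\cup\{t\})$ and pick the two $[n+1,m]$-parts disjoint (possible as $m$ is large), producing two disjoint members of $\mathcal{F}$, a contradiction. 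Hence every trace at level $i'$ contains $t$. Because these traces range over all $c_{i'}$-subsets through $t_{(i',j')}$ (over $I$ at the fixed point) and $c_{i'}\leqslant n-1$, this forces $t_{(i',j')}=t$ (respectively $I$ equal to the star through $t$, by the cardinality of $I$). Thus every level is a star through the single element $t\in[n]$, so $\mathcal{F}=\mathcal{H}_t$.

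I expect the fixed-point level (for even $n$) to be the main obstacle: there ${\sf KG}(n,n/2)$ is a perfect matching whose maximum independent sets are far from unique, so the per-level analysis alone cannot single out a star. The device that rescues the argument is precisely the top level $L$, whose traces are short enough ($c_L+n/2\leqslant n$ since $n<2k-3$) to conflict with every other level and thereby impose a common fixed element. Beyond this, the remaining work is routine bookkeeping: checking the numerical inequalities $c_L+c_{i'}\leqslant n$ and $c_{i'}\leqslant n-1$, and quantifying ``$m$ sufficiently large'' so that each pair meets the hypotheses of Lemma \ref{LEM:FUF} and the disjoint completion of the $[n+1,m]$-parts is available.
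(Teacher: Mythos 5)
Your proposal is correct and follows essentially the same route as the paper: the same pairing of levels $i$ and $2k-n-i$, the same application of Lemma \ref{LEM:FUF} to each pair and of Theorem \ref{DIR}/Lemma \ref{lem_max} to the unpaired middle level when $n$ is even, and the same summation yielding $h(m,n,k)$ as an upper bound. The only real difference is your coherence step, which anchors at the top level $L=2k-n-1$ and forces every trace at every other level to contain $t$; this is a cleaner variant that, unlike the paper's bare assertion that the middle level also has a common element $t_i$, actually derives the star structure of $\mathcal{F}_{k-n/2}$ -- whose projection is a priori only a maximum independent set of the perfect matching ${\sf KG}(n,n/2)$, hence far from a star -- so your write-up makes explicit a point the paper glosses over.
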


\begin{proof}
Let an $(m,n,k)$-intersecting family $\mathcal{F}$ have canonical partition
$\binom{[n]}{k} \cup (\bigcup_{i=1}^{2k-n-1} \mathcal{F}_i)$ as before.
When $n$ is odd, we put $\mathcal{F}_i$ and $\mathcal{F}_{2k-n-i}$ into
a pair for $1 \leqslant i \leqslant (2k-n-1)/2$. When $n$ is even, we put
$\mathcal{F}_i$ and $\mathcal{F}_{2k-n-i}$ into a pair for $1\leqslant i
\leqslant \lfloor (2k-n-1)/2 \rfloor - 1$, and leave $\mathcal{F}_{\lfloor 
(2k-n-1)/2 \rfloor}$ unpaired.

Let $c=k-i$ and $d=n-k+i$. The subfamily $\mathcal{F}_i \cup 
\mathcal{F}_{2k-n-i}$ is an intersecting family and satisfies the conditions 
in Lemma \ref{LEM:FUF}. Therefore $|\mathcal{F}_i| + |\mathcal{F}_{2k-n-i}|
\leqslant \binom{n-1}{k-i-1}\binom{m-n}{i} + \binom{n-1}{n-k+i-1}
\binom{m-n}{2k-n-i}$ for sufficiently large $m$. When $n$ is odd, we 
immediately have the following.
\begin{align*}
|\mathcal{F}| &\leqslant  \binom{n}{k} + \sum_{i=1}^{(2k-n-1)/2}
                            \binom{n-1}{k-i-1}\binom{m-n}{i}+
                            \binom{n-1}{k-i}\binom{m-n}{2k-n-i} \\
              &=  \binom{n}{k} + \sum_{i=1}^{2k-n-1}\binom{n-1}{k-i-1}
                    \binom{m-n}{i}.
\end{align*}
When $n$ is even, we have $|\mathcal{F}_i| \leqslant \binom{n-1}{k-i-1}
\binom{m-n}{i}$ for $i = \lfloor (2k-n-1)/2 \rfloor$ by Theorem \ref{DIR}.
Together with other upper bounds of $|\mathcal{F}_i \cup \mathcal{F}_{2k-n-i}|$, 
we have shown $|\mathcal{F}| \leqslant \binom{n}{k} + \sum_{i=1}^{2k-n-1}
\binom{n-1}{k-i-1}\binom{m-n}{i}$.

When $\mathcal{F}$ is a maximum $(m,n,k)$-intersecting family, for each 
pair $\mathcal{F}_i$ and $\mathcal{F}_{2k-n-i}$, there is an element 
$t_i$ belonging to every member of $\mathcal{F}_i \cup \mathcal{F}_{2k-n-i}$.
This also holds for $\mathcal{F}_i$, $i = \lfloor (2k-n-1)/2 \rfloor$ 
for even $n$. Suppose that there exist $\mathcal{F}_{i_1} \cup
\mathcal{F}_{2k-n-{i_1}}$ and $\mathcal{F}_{i_2} \cup \mathcal{F}_{2k-n-{i_2}}$
for which $t_{i_1} \neq t_{i_2}$. (The case that one of them is $\mathcal{F}_i$, 
$i = \lfloor (2k-n-1)/2 \rfloor$ for even $n$, is the same.) Note that
\[
\mathcal{F}_{2k-n-{i_j}} = \left \{A \cup B \cup \{t_{i_j}\} \mid A \in \binom{[n] 
\setminus \{t_{i_j}\}}{n-k+{i_j}-1}, B \in \binom{[n+1,m]}{2k-n-i_j} 
\right \}
\] 
for $j=1,2$. Since $2(n-k+{i_j}-1) \leqslant n-1$ and $2(2k-n-i_j) < m-n$, 
we can find subsets $F_j \in \mathcal{F}_{2k-n-{i_j}}$ for $j = 1, 2$ such 
that $F_1 \cap F_2 = \emptyset$ if $t_{i_1} \neq t_{i_2}$. Therefore 
$t_{i_1} \neq t_{i_2}$ cannot happen. Consequently, $\mathcal{F} = 
\mathcal{H}_t$ for some $t \in [n]$.
\end{proof}

%
\section{Conclusion}
%

We have introduced the notion of an $(m,n,k)$-intersecting family
and studied its maximum cardinality $\alpha(m,n,k)$. The well-known
theorems of Erd\H{o}s-Ko-Rado and Hilton-Milner in finite extremal
set theory are special cases for $n = k$ and $n = k+1$. The common
cardinality $h(m,n,k)$ of a particular collection of $(m,n,k)$-intersecting
families $\mathcal{H}_t^{m,n,k}$ supplies a natural lower bound for 
$\alpha(m,n,k)$. A noticeable feature of $\mathcal{H}_t^{m,n,k}$ is that 
members of $\mathcal{H}_t^{m,n,k} \setminus \binom{[n]}{k}$ have a nonempty
intersection. We have proved that the families $\mathcal{H}_t^{m,n,k}$ are 
precisely all the $(m,n,k)$-intersecting families of maximum cardinality 
for the cases $n = 2k-1, 2k-3$, and $m$ sufficiently large. When $n = 2k-2$, 
there are other maximum families. Whether $\alpha(m,n,k) = h(m,n,k)$ is true 
in all cases and $\mathcal{H}_t^{m,n,k}$, $n \neq 2k-2$, always characterizes 
maximum families are interesting open problems. Analogue problems can be 
formulated with respect to intersecting families having intersection size 
greater than some prescribed positive integer.

\bigskip
\bigskip

\noindent
{\Large \bf Acknowlegment}. \  This work was done while the first author was a 
post-doctoral fellow in the Institute of Mathematics, Academia Sinica.
The support provided by the Institute is greatly appreciated. 

\bigskip

%

\end{document}